\documentclass{amsart}
\usepackage{amsfonts}
\usepackage{color}
\parskip 3pt

\def\R{{\mathbb {R}}}
\def\A{{\mathcal {A}}}
\def\N{{\mathbb {N}}}

\def\G{{\mathcal{G}}}
\def\L{{\mathcal{L}}}
\def\K{{\mathcal{K}}}

\def\Ks{{\mathcal{K}_{\mathrm{sym}}}}
\def\dh{d_\mathcal{H}}

\newtheorem{teo}{Theorem}[section]
\newtheorem{lema}[teo]{Lemma}
\newtheorem{prop}[teo]{Proposition}

\theoremstyle{remark}
\newtheorem{remark}[teo]{Remark}

\theoremstyle{definition}
\newtheorem{defi}[teo]{Definition}

\numberwithin{equation}{section}

\title[Eigenvalues of nonlocal problems]{Gamma convergence and asymptotic behavior for eigenvalues of nonlocal problems}
\author[J. Fern\'andez Bonder, A. Silva and J.F. Spedaletti]{Juli\'an Fern\'andez Bonder , Anal\'ia Silva and Juan F. Spedaletti}

\address[J. Fern\'andez Bonder]{Instituto de Matem\'atica Luis A. Santal\'o (IMAS), CONICET\hfill\break\indent
Departamento de Matem\'atica, FCEN - Universidad de Buenos Aires
\hfill\break \indent Ciudad Universitaria, Pabell\'on I, C1428EGA, Av. Cantilo s/n. \hfill\break \indent Buenos Aires, Argentina.}

\email{jfbonder@dm.uba.ar}
\urladdr{http://mate.dm.uba.ar/~jfbonder}

\address[A. Silva]{Instituto de Matem‡tica Aplicada San luis (IMASL) \hfill\break\indent
Universidad Nacional de San Luis, CONICET \hfill\break\indent
Ejercito de los Andes 950, D5700HHW, \hfill\break\indent
San Luis, Argentina}

\email{acsilva@unsl.edu.ar}
\urladdr{https://analiasilva.weebly.com/}

\address[J.F. Spedaletti]{Instituto de Matem‡tica Aplicada San luis (IMASL) \hfill\break\indent
Universidad Nacional de San Luis, CONICET \hfill\break\indent
Ejercito de los Andes 950, D5700HHW, \hfill\break\indent
San Luis, Argentina}

\email{jfspedaletti@unsl.edu.ar}

\begin{document}

\begin{abstract}
In this paper we analyze the asymptotic behavior of several fractional eigenvalue problems by means of Gamma-convergence methods. This method allows us to treat different eigenvalue problems under a unified framework. We are able to recover some known results for the behavior of the eigenvalues of the $p-$fractional laplacian when the fractional parameter $s$ goes to 1, and to extend some known results for the behavior of the same eigenvalue problem when $p$ goes to $\infty$. Finally we analyze other eigenvalue problems not previously covered in the literature.
\end{abstract}

\subjclass[2010]{35P30, 35J92, 49R05}

\keywords{Fractional eigenvalues; stability of nonlinear eigenvalues; fractional $p-$laplacian problems}

\maketitle 

\section{Introduction}
The goal of this paper is to study the asymptotic behavior of several eigenvalue problems for nonlocal (and nonlinear) operators under a unified approach given by the Gamma-convergence.

Up to our knowledge, this approach was first applied in a paper by T. Champion and L. De Pascale in \cite{ChDP}. In that paper the authors prove, in the context of eigenvalue problems for the $p-$Laplacian, an abstract result that lead them to show in a unified framework some asymptotic behavior for the eigenvalues of some $p-$Laplacian type problems that included the cases where $p\to\infty$ and some homogenization results. See also \cite{Bonder-Pinasco-Salort} for some results in the case of indefinite weights.

In recent years, there have been a great deal of work in understanding nonlocal phenomena and as a consequence,  the so-called {\em fractional eigenvalues}. See \cite{BPS, DRS, LL}, etc.

For instance, in \cite{LL} the authors considered the eigenvalues associated to the fractional $p-$Laplacian and analyze their limit as $p\to\infty$. For this problem, one of the results in \cite{LL} proves that the first fractional eigenvalue converges as $p\to \infty$ to the first eigenvalue of the {\em H\"older infinity Laplacian}. 

On the other hand, in \cite{BPS} the authors analyze the limit as $s\to 1$ of the same fractional eigenvalue problem and prove that these eigenvalues converge to the eigenvalues of the local $p-$Laplace operator. See also \cite{FBS} for a related result.

So the main purpose of this work is to extend the general abstract result of \cite{ChDP} to the fractional setting and show that all of the above mentioned results can be easily deduced from this theory. Finally, as a further example of the flexibility of our results, we show some new asymptotic behavior for eigenvalues not previously covered in the literature.

\subsection*{Organization of the paper}
The paper is organized as follows: In section \S 2, we recall all the preliminaries needed in the course of the work. This includes the concepts of Gamma convergence, Krasnoselskii genus, Hausdorff distance of compact sets and fractional order Sobolev spaces. All the results in this section are well known to experts and the reader can safely skip it and return to it only if necessary.

In section \S 3, we prove the main result of the paper, Theorem \ref{main.teo} that is the general abstract result on convergence of eigenvalues.

Finally, in section \S 4, we apply our abstract result to a number of examples where we recover some results of \cite{BPS}, extend the analysis of \cite{LL} and analyze some other eigenvalue problems not previously considered in the literature.

\section{Preliminaries}
In this section we review some elementary facts needed in the course of our main results. All the results presented here are well known to experts and you can safely skip this section and return to it when necessary. The topics reviewed in the section are: Gamma convergence, Krasnoselskii genus, fractional order Sobolev spaces and Hausdorff distance for compact sets. We divide this section into four subsections each one corresponding to an specific topic.

\subsection{Gamma convergence}
Let us begin with the basic definition.

\begin{defi}\label{def.gamma}
Let $(X,d)$ be a metric space and let $\{F_n\}_{n\in\N}$ be a sequence of functions defined in $X$ into the extended real line. We say that $F_n$ $\Gamma-$converges to some function $F_0$ (denoted by $F_n \stackrel{\Gamma}{\to} F_0$) if the following two inequalities hold:
\begin{itemize}
\item ($\liminf-$inequality)

For every $x\in X$ and for every $\{x_n\}_{n\in\N}\subset X$ such that $x_n\to x$ in $X$, it holds
\begin{equation}\label{liminf}
F_0(x)\le \liminf_{n\to\infty} F_n(x_n).
\end{equation}

\item ($\limsup-$inequality)

For every $x\in X$, there exists $\{y_n\}_{n\in\N}\subset X$ such that $y_n\to x$ in $X$ and
$$
F_0(x)\ge \limsup_{n\to\infty} F_n(y_n).
$$
\end{itemize}
\end{defi}

\begin{remark} The sequence $\{y_n\}_{n\in\N}$ given above is usually called the {\em recovery sequence}.
\end{remark}

The concept of Gamma convergence goes back to the 60s and was introduced by E. De Giorgi. It is the natural concept to deal with the approximation of minimization problems. The next result is not the most general one about Gamma convergence but it will suit our purposes. For a throughout introduction to the subject, we recommend the interested reader the excellent book \cite{DalMaso}.

\begin{teo}\label{teo.gamma}
Let $(X,d)$ be a metric space, $\{F_n\}_{n\in\N}$ be a sequence of functions defined in $X$ into the extended real line such that $F_n\stackrel{\Gamma}{\to} F_0$ for some function $F_0$. Assume that there exists $x_n\in X$ such that
$$
F_n(x_n)=\inf_X F_n = \min_X F_n.
$$
Moreover, assume that the sequence $\{x_n\}_{n\in\N}$ is precompact in $X$. Then
$$
\inf_X F_0 = \lim_{n\to\infty} \left(\inf_X F_n\right)
$$
and for every accumulation point $x_0$ of the sequence $\{x_n\}_{n\in\N}$, we have that
$$
F_0(x_0)=\inf_X F_0 =\min_X F_0.
$$
\end{teo}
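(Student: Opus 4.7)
The plan is standard for this foundational Gamma-convergence result: combine the $\limsup$-inequality applied through recovery sequences with the $\liminf$-inequality applied along converging subsequences of minimizers, then use the precompactness hypothesis to pin down accumulation points as minimizers of $F_0$.

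First, I would establish the upper bound $\limsup_{n\to\infty}\bigl(\inf_X F_n\bigr)\le \inf_X F_0$. Fix any $x\in X$ and invoke the $\limsup$-inequality to obtain a recovery sequence $\{y_n\}\subset X$ with $y_n\to x$ and $\limsup_n F_n(y_n)\le F_0(x)$. Since $x_n$ minimizes $F_n$, I have $F_n(x_n)\le F_n(y_n)$, so
\[
\limsup_{n\to\infty}\bigl(\inf_X F_n\bigr)=\limsup_{n\to\infty} F_n(x_n)\le\limsup_{n\to\infty} F_n(y_n)\le F_0(x).
\]
Taking the infimum over $x\in X$ yields $\limsup_{n\to\infty}\bigl(\inf_X F_n\bigr)\le\inf_X F_0$.

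Next, I would extract the matching lower bound using precompactness. Choose a subsequence $\{n_k\}$ realizing $\liminf_n\bigl(\inf_X F_n\bigr)$, i.e.\ $F_{n_k}(x_{n_k})=\inf_X F_{n_k}\to \liminf_n\bigl(\inf_X F_n\bigr)$. By precompactness, a further subsequence (still denoted $x_{n_k}$) converges to some $x^\ast\in X$. Applying the $\liminf$-inequality \eqref{liminf},
\[
\inf_X F_0\le F_0(x^\ast)\le \liminf_{k\to\infty} F_{n_k}(x_{n_k})=\liminf_{n\to\infty}\bigl(\inf_X F_n\bigr).
\]
Chaining this with the upper bound from the previous step forces equality throughout; hence $\lim_{n\to\infty}\bigl(\inf_X F_n\bigr)$ exists, equals $\inf_X F_0$, and $F_0(x^\ast)=\inf_X F_0=\min_X F_0$ (the minimum being attained by $x^\ast$).

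Finally, I would argue that every accumulation point is a minimizer. If $x_0$ is any accumulation point of $\{x_n\}$, pick a subsequence $x_{n_j}\to x_0$. Then, using the $\liminf$-inequality once more,
\[
\inf_X F_0\le F_0(x_0)\le\liminf_{j\to\infty} F_{n_j}(x_{n_j})=\liminf_{j\to\infty}\bigl(\inf_X F_{n_j}\bigr)=\inf_X F_0,
\]
where the last equality uses that the full sequence $\inf_X F_n$ converges to $\inf_X F_0$. Hence $F_0(x_0)=\inf_X F_0=\min_X F_0$, as required. There is no real obstacle here: the whole argument is an essentially formal manipulation of the two defining inequalities of $\Gamma$-convergence, and the only delicate point is to be careful about passing to subsequences when reading off the $\liminf$ of $\inf_X F_n$.
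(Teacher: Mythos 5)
Your proof is correct and it is the standard elementary argument for this fact; the paper itself does not write out a proof of Theorem~\ref{teo.gamma} (it merely remarks that the proof is elementary and defers to \cite{DalMaso}), so there is no in-paper proof to compare step by step. One point you pass over silently is the use of the $\liminf$-inequality \eqref{liminf} along a subsequence $\{x_{n_k}\}$: as written, Definition~\ref{def.gamma} concerns sequences indexed by all of $\N$, so strictly one should extend $\{x_{n_k}\}$ to a full sequence $\{z_n\}$ converging to $x^\ast$ (for instance $z_n=x_{n_k}$ for $n_k\le n<n_{k+1}$), invoke \eqref{liminf} to get $F_0(x^\ast)\le\liminf_n F_n(z_n)$, and then observe $\liminf_n F_n(z_n)\le\liminf_k F_{n_k}(x_{n_k})$. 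This is a routine remark and does not affect the validity of your argument.
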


The proof of Theorem \ref{teo.gamma} is elementary and, of course, is by no means the best result in Gamma convergence. As we mentioned before, we refer to \cite{DalMaso} for sharper results, but for the purpose of this paper, Theorem \ref{teo.gamma} will be enough in most parts.

A useful generalization, again with an elementary proof, is the following.
\begin{teo}\label{teo.gamma.2}
Let $(X,d)$ be a metric space, $\{F_n\}_{n\in\N}$ be a sequence of functions defined in $X$ into the extended real line.

Assume that there exists $F_0\colon X\to\bar\R$ such that the $\liminf-$inequality \eqref{liminf} holds and assume in addition that the sequence $\{F_n\}_{n\in\N}$ is equicoercive and 
$$
\inf_X F_0 = \lim_{n\to\infty} \left(\inf_X F_n\right).
$$

Let $\{x_n\}_{n\in\N}\subset X$ be any sequence of quasi-minima of $\{F_n\}_{n\in\N}$, i.e.
$$
F_n(x_n) = \inf_X F_n + o(1).
$$
Then, any accumulation point $x_0$ of $\{x_n\}_{n\in\N}$ is a minimum point of $F_0$.
\end{teo}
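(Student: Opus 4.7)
The plan is to extract a subsequence $x_{n_k} \to x_0$ (which exists by definition of accumulation point), identify $\lim_k F_{n_k}(x_{n_k})$ as $\inf_X F_0$, and then apply the $\liminf$-inequality \eqref{liminf} along a cleverly chosen sequence converging to $x_0$ to obtain $F_0(x_0) \le \inf_X F_0$.

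First, I would use the quasi-minimality hypothesis $F_n(x_n) = \inf_X F_n + o(1)$ together with the assumption $\inf_X F_n \to \inf_X F_0$ to deduce that $\lim_{n\to\infty} F_n(x_n) = \inf_X F_0$, and in particular $\lim_{k\to\infty} F_{n_k}(x_{n_k}) = \inf_X F_0$ along the converging subsequence. This step is essentially arithmetic.

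The key technical step is the application of \eqref{liminf}. Since \eqref{liminf} quantifies over full sequences indexed by $n$, not over arbitrary subsequences, I would interleave $\{x_{n_k}\}$ into a full sequence by setting $y_n = x_{n_k}$ when $n = n_k$ and $y_n = x_0$ otherwise. Since $x_{n_k} \to x_0$, one checks that $y_n \to x_0$ in $X$. Then \eqref{liminf} applied at $x_0$, combined with the elementary fact that the $\liminf$ along a subsequence is at least the $\liminf$ of the full sequence, yields
$$
F_0(x_0) \le \liminf_{n\to\infty} F_n(y_n) \le \liminf_{k\to\infty} F_{n_k}(y_{n_k}) = \lim_{k\to\infty} F_{n_k}(x_{n_k}) = \inf_X F_0,
$$
and the reverse inequality $F_0(x_0) \ge \inf_X F_0$ is automatic, so equality holds.

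The main (minor) obstacle I expect is precisely the interleaving trick in the third step: one cannot feed the bare subsequence $\{x_{n_k}\}$ into \eqref{liminf} directly, because \eqref{liminf} requires the index $n$ to range over all of $\N$. Note also that the equicoercivity hypothesis is not used explicitly in the chain above; its role is to guarantee, via precompactness of the quasi-minimizing sequence, that accumulation points of $\{x_n\}$ actually exist, so that the statement is nonvacuous.
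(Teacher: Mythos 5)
Your proof is correct. The paper does not actually write out a proof of Theorem \ref{teo.gamma.2}; it only remarks that the result admits an elementary proof, and your argument is the natural such proof: pass to a subsequence $x_{n_k}\to x_0$, use quasi-minimality plus the hypothesis $\inf_X F_n\to\inf_X F_0$ to identify $\lim_k F_{n_k}(x_{n_k})=\inf_X F_0$, and then apply the $\liminf$-inequality via the interleaving device to conclude $F_0(x_0)\le\inf_X F_0$. The interleaving step is indeed necessary because Definition \ref{def.gamma} quantifies over sequences indexed by all of $\N$, and your remark that equicoercivity serves only to guarantee that accumulation points exist (so the conclusion is nonvacuous) is also accurate.
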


\subsection{Krasnoselskii's genus and its properties}

The Krasnoselskii's genus is a widely used topological tool in minimax method for finding solutions to some PDE problems. In particular, it is the most common tool in nonlinear eigenvalue theory. We refer to \cite{Rabinowitz} for a very good introduction to the subject.

In this very short subsection we recall its definition and the properties required in our proofs.

\begin{defi}
Let $E$ be a real Banach space and $A\subset E$ be a nonempty closed and symmetric set, i.e. $A=-A$.

The genus of $A$, denoted by $\gamma(A)$, is defined as
$$
\gamma(A) := \inf\{m\in\N\colon \text{there exists a continuous and odd function } \varphi\colon A\to \R^m\setminus\{0\}\}.
$$
In the definition above, it is understood that $\inf\emptyset = \infty$.
\end{defi}

Among its many interesting properties, the following one is the one that is going to be used explicitly in the paper.

\begin{prop}\label{prop.genus}
Assume that $A\subset E$ is symmetric and compact. Then, there exists a symmetric neighborhood $N$ of $A$ such that $\gamma(\bar{N}) = \gamma(A)$.
\end{prop}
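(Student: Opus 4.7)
The plan is to build the neighborhood $N$ directly from the odd map realizing the genus of $A$. Writing $k=\gamma(A)$ (which may be $\infty$; I treat the finite case, as the infinite case is trivial by monotonicity of the genus), the definition provides a continuous odd map $\varphi\colon A\to\R^k\setminus\{0\}$. The first step is to extend $\varphi$ to a continuous map $\tilde\varphi\colon E\to\R^k$ by applying the Tietze extension theorem componentwise, and then to symmetrize it by setting $\tilde\varphi(x)\mapsto\frac12(\tilde\varphi(x)-\tilde\varphi(-x))$. Since $\varphi$ was already odd on $A=-A$, this new map still agrees with $\varphi$ on $A$, is continuous on $E$, and is odd.

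Next I would exploit the compactness of $A$. Because $\tilde\varphi$ is continuous and $|\tilde\varphi|$ is strictly positive on $A$, compactness yields $\delta:=\min_{x\in A}|\tilde\varphi(x)|>0$. Now define
$$
N:=\{x\in E\colon |\tilde\varphi(x)|>\delta/2\}.
$$
This set is open as the preimage of an open set under a continuous map, it is symmetric because $|\tilde\varphi(-x)|=|\tilde\varphi(x)|$ (oddness of $\tilde\varphi$), and it contains $A$ since $|\tilde\varphi|\ge \delta>\delta/2$ on $A$. Moreover, by continuity, $\overline{N}\subset\{x\in E\colon|\tilde\varphi(x)|\ge\delta/2\}$, so $\tilde\varphi$ does not vanish on $\overline{N}$.

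To finish, I would read off both inequalities between $\gamma(\overline{N})$ and $\gamma(A)$. On the one hand, the restriction $\tilde\varphi|_{\overline{N}}\colon\overline{N}\to\R^k\setminus\{0\}$ is continuous and odd, so by definition $\gamma(\overline{N})\le k=\gamma(A)$. On the other hand, $A\subset\overline{N}$ and both are closed and symmetric; the standard monotonicity property of the genus (any continuous odd map from $\overline{N}$ into $\R^m\setminus\{0\}$ restricts to such a map on $A$) gives $\gamma(A)\le\gamma(\overline{N})$. Combining the two yields $\gamma(\overline{N})=\gamma(A)$, as required.

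I do not foresee a serious obstacle: the argument is classical, and the only subtle point is ensuring the two symmetry requirements simultaneously (for the extension $\tilde\varphi$ and for the neighborhood $N$). Both are handled automatically once we symmetrize the Tietze extension and define $N$ as a sublevel-type set of $|\tilde\varphi|$, since $|\tilde\varphi|$ is then even.
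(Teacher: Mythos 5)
The paper does not prove this proposition; it is stated as a known property of the Krasnoselskii genus with a pointer to \cite{Rabinowitz}, so there is no in-paper proof to compare against. Your argument is correct and is the standard one: Tietze-extend the odd map $\varphi\colon A\to\R^k\setminus\{0\}$ realizing $\gamma(A)=k$, symmetrize the extension to preserve oddness while still agreeing with $\varphi$ on $A=-A$, use compactness of $A$ to get a positive lower bound $\delta$ for $|\tilde\varphi|$ on $A$, take $N=\{|\tilde\varphi|>\delta/2\}$, and read off both inequalities from the nonvanishing odd map on $\overline N$ and monotonicity of the genus. The only points worth stating explicitly (and which you handle implicitly) are that $A$ is closed in $E$ (compact in a metric space) so Tietze applies, and that $|\tilde\varphi|$ is even so $N$ is automatically symmetric.
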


\begin{remark}
Observe that we trivially have that $A\subset B\Rightarrow \gamma(A)\le \gamma(B)$. So Proposition \ref{prop.genus}  says that for compact symmetric sets, we can slightly fatten the set without increasing the genus.
\end{remark}

\subsection{Fractional order Sobolev spaces}
In this subsection, we recall the definition and basic properties of fractional order Sobolev spaces. Even though this is a very classical topic, its interest in PDEs has been renewed in the last 20 years or so, and a very good reference for the subject is the review article \cite{DNPV}.

Given a function $u\in L^1_{\rm loc}(\R^N)$, $0<s<1\le p<\infty$, we define its Gagliardo semi-norm as
$$
[u]_{s,p} := \left(\iint_{\R^N\times\R^N} \frac{|u(x)-u(y)|^p}{|x-y|^{N+sp}}\, dxdy\right)^\frac{1}{p}.
$$
We then define the fractional order Sobolev space $W^{s,p}(\R^N)$ as
$$
W^{s,p}(\R^N) := \{u\in L^p(\R^N)\colon [u]_{s,p}<\infty\}.
$$
This space is a Banach space with norm given by $\|u\|_{s,p} = (\|u\|_p^p + [u]_{s,p}^p )^\frac{1}{p}$. This is a separable space and reflexive for $p>1$.

Given an open set $\Omega\subset\R^n$, we then consider
\begin{equation}\label{def.fraccionario}
W^{s,p}_0(\Omega) := \{u\in W^{s,p}(\R^N)\colon u=0 \text{ a.e. in } \R^N\setminus\Omega\}.
\end{equation}
Since $W^{s,p}_0(\Omega)$ is a closed subspace of $W^{s,p}(\R^N)$ it inherits the properties of separability and reflexivity.

\begin{remark}
This definition for $W^{s,p}_0(\Omega)$, is not the most natural one. One can instead consider the Gagliardo semi-norm defined in $\Omega$ as
$$
[u]_{s,p;\Omega} := \left(\iint_{\Omega\times\Omega} \frac{|u(x)-u(y)|^p}{|x-y|^{N+sp}}\, dxdy\right)^\frac{1}{p}.
$$ 
Then define $W^{s,p}(\Omega) := \{u\in L^p(\Omega)\colon [u]_{s,p;\Omega}<\infty\}$ with norm $\|u\|_{s,p;\Omega} =  (\|u\|_{p;\Omega}^p + [u]_{s,p;\Omega}^p )^\frac{1}{p}$ and hence define $W^{s,p}_0(\Omega)$ as the closure of $C^\infty_c(\Omega)$ with respect to $\|\cdot\|_{s,p;\Omega}$.

This two definitions for $W^{s,p}_0(\Omega)$ are known to coincide if, for instance, $\Omega$ has Lipschitz boundary or if $0<s<\tfrac{1}{p}$. In this latter case, $W^{s,p}_0(\Omega)=W^{s,p}(\Omega)$.
\end{remark}

In this paper, we consider the space $W^{s,p}_0(\Omega)$ given by \eqref{def.fraccionario}.

For functions in this space, Poincar\'e inequality holds true.
\begin{prop}\label{Poincare}
There exists a constant $C$ depending on $N, s, p$ and $|\Omega|$ such that
$$
\|u\|_p\le C [u]_{s,p},
$$
for every $u\in W^{s,p}_0(\Omega)$.
\end{prop}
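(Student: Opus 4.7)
The plan is to exploit that any $u\in W^{s,p}_0(\Omega)$ vanishes a.e.\ on $\R^N\setminus\Omega$, and then bound the Gagliardo double integral from below by the ``cross-terms'' where $x\in\Omega$ and $y\notin\Omega$. Since $u(y)=0$ for such $y$, the numerator reduces to $|u(x)|^p$ and we obtain
$$
[u]_{s,p}^p \;\ge\; \int_\Omega |u(x)|^p \left(\int_{\R^N\setminus\Omega} \frac{dy}{|x-y|^{N+sp}}\right) dx.
$$
Thus the whole problem reduces to showing that the inner integral admits a lower bound depending only on $N,s,p,|\Omega|$, uniformly in $x\in\Omega$.

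The key geometric step is a volume-comparison trick. Fix $x\in\Omega$ and choose $R=R(N,|\Omega|)$ with $|B_R|=2|\Omega|$, i.e.\ $R=(2|\Omega|/\omega_N)^{1/N}$. Then $|B_R(x)\setminus\Omega|\ge |B_R(x)|-|\Omega|=|\Omega|$, and on $B_R(x)$ we have $|x-y|\le R$, so
$$
\int_{\R^N\setminus\Omega}\frac{dy}{|x-y|^{N+sp}}\;\ge\;\int_{B_R(x)\setminus\Omega}\frac{dy}{|x-y|^{N+sp}}\;\ge\;\frac{|B_R(x)\setminus\Omega|}{R^{N+sp}}\;\ge\;\frac{|\Omega|}{R^{N+sp}}=:c_0,
$$
where $c_0=c_0(N,s,p,|\Omega|)>0$. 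Plugging this uniform lower bound back in gives $[u]_{s,p}^p\ge c_0\|u\|_p^p$, which is exactly the Poincaré inequality with $C=c_0^{-1/p}$.

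The main obstacle is really just that one geometric step: making sure the lower bound on the inner integral is uniform in $x\in\Omega$ and depends only on $|\Omega|$ (and not on, say, the diameter of $\Omega$, which could be large for long thin sets). The volume-comparison argument above handles this cleanly, because for \emph{any} $x\in\Omega$ the ball $B_R(x)$ of the fixed radius $R$ has volume $2|\Omega|$, so at least half of it must lie outside $\Omega$, regardless of the shape of $\Omega$. Everything else is bookkeeping: the passage from the double integral to the cross-term, and writing $C$ explicitly in terms of $N,s,p,|\Omega|$.
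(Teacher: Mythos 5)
Your proof is correct. The paper does not give its own proof of this Poincar\'e inequality: it simply defers to the survey \cite{DNPV} (see the remark after Theorem~\ref{Morrey}, ``The proof of all the above mentioned results \ldots can be found, for instance, in \cite{DNPV}''), so there is no internal argument to compare against. Your route is a clean, self-contained one: restrict the Gagliardo double integral to the cross-term region $\Omega\times\Omega^c$, where $u(y)=0$ turns the numerator into $|u(x)|^p$, and then bound $\int_{\Omega^c}|x-y|^{-(N+sp)}\,dy$ from below uniformly in $x\in\Omega$ by the volume-comparison ball $B_R(x)$ with $|B_R|=2|\Omega|$, which forces at least half of $B_R(x)$ to lie in $\Omega^c$. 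This is more elementary than the common alternative of invoking the fractional Sobolev (or Morrey) embedding plus H\"older, and it has the virtue of treating $sp<N$, $sp=N$ and $sp>N$ uniformly in a single stroke, producing a constant that genuinely depends only on $N,s,p,|\Omega|$ as the proposition asserts. The only implicit assumption is $|\Omega|<\infty$, which is consistent with the statement (the constant depends on $|\Omega|$).
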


As usual, Poinar\'e inequality implies that $[\,\cdot\,]_{s,p}$ defines a norm in $W^{s,p}_0(\Omega)$ equivalent to $\|\cdot\|_{s,p}$.

The following theorem gives the Sobolev immersion in the fractional setting.
\begin{teo}\label{Sobolev.immersion}
Assume that $\Omega$ has finite measure and that $sp\le N$. Define the critical Sobolev exponent as
$$
p_s^* := \begin{cases}
\frac{Np}{N-sp} & \text{if } sp<N\\
\infty & \text{if } sp=N.
\end{cases}
$$
Then, for any $1\le q\le p_s^*$ (or $1\le q<\infty$ if $sp=N$) we have that
$$
\|u\|_q\le C [u]_{s,p},
$$
for some constant $C$ depending on $N, s, p, q$ and $\Omega$.

Moreover, if $1\le q < p_s^*$, then the immersion $W^{s,p}_0(\Omega)\subset L^q(\Omega)$ is compact.
\end{teo}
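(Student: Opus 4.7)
The plan is to establish the continuous embedding first and then derive compactness via the Fr\'echet--Kolmogorov theorem combined with interpolation.

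\emph{Continuous embedding.} The central case is $sp<N$ with $q=p_s^*$; everything else reduces to it. I would prove the fractional Sobolev inequality
\[
\|u\|_{p_s^*}\le C\,[u]_{s,p}\qquad\text{for every }u\in W^{s,p}(\R^N),
\]
which suffices because $W^{s,p}_0(\Omega)\subset W^{s,p}(\R^N)$ by the very definition \eqref{def.fraccionario}. When $p=2$ this is immediate via Fourier transform: $[u]_{s,2}^2$ coincides up to a constant with $\|(-\Delta)^{s/2}u\|_2^2$, and one inverts the fractional Laplacian by the Riesz potential $I_s$ and applies Hardy--Littlewood--Sobolev. For general $p>1$ I would follow the level-set truncation argument in \cite{DNPV}, which works directly on the Gagliardo seminorm and avoids Fourier tools. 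Once this endpoint inequality is in hand, the remaining values $1\le q<p_s^*$ follow by H\"older's inequality, using that $u$ vanishes outside $\Omega$ and $|\Omega|<\infty$. The borderline $sp=N$ is handled similarly after reducing to some $s'<s$ with $s'p<N$ (noting $[u]_{s',p}\le C([u]_{s,p}+\|u\|_p)$ on functions supported in a bounded set), which yields $L^{q_0}(\Omega)$ embedding with $q_0=p_{s'}^*$ arbitrarily large, and then H\"older finishes any $q<\infty$.

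\emph{Compactness.} I would first prove that bounded sequences $\{u_n\}\subset W^{s,p}_0(\Omega)$ are precompact in $L^p(\Omega)$ via the Fr\'echet--Kolmogorov criterion. The uniform $L^p$-bound is Proposition~\ref{Poincare}; equi-tightness at infinity is automatic because each $u_n$ vanishes outside the finite-measure set $\Omega$; and the translation equi-continuity $\sup_n\|u_n(\cdot+h)-u_n\|_p\to 0$ as $|h|\to 0$ follows from standard estimates controlling $L^p$-translations by the Gagliardo seminorm, obtained by splitting the double integral defining $[u_n]_{s,p}^p$ according to whether $|x-y|$ is smaller or larger than $|h|$ and exploiting Fubini. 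To pass from $L^p$ to $L^q$ with $p<q<p_s^*$, I would interpolate: choose $r\in(q,p_s^*)$ and $\theta\in(0,1)$ with $\tfrac{1}{q}=\tfrac{\theta}{p}+\tfrac{1-\theta}{r}$, so that
\[
\|u_m-u_n\|_q\le\|u_m-u_n\|_p^{\theta}\,\|u_m-u_n\|_{r}^{1-\theta};
\]
the second factor is uniformly bounded by the continuous embedding into $L^r(\Omega)$ and the first tends to zero along any $L^p$-convergent subsequence. The range $1\le q\le p$ is immediate by H\"older, again using $|\Omega|<\infty$.

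The main obstacle is the Sobolev--Gagliardo inequality $\|u\|_{p_s^*}\le C[u]_{s,p}$ itself: each of its standard proofs (Hardy--Littlewood--Sobolev via $(-\Delta)^{s/2}$, a dyadic level-set decomposition, or the Caffarelli--Silvestre harmonic extension to the upper half-space) demands genuine analytic input. Everything else in the statement reduces to this inequality together with Poincar\'e, the finite measure of $\Omega$, and the translation estimate for the Gagliardo seminorm.
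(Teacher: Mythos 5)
The paper gives no proof of this theorem: immediately after stating it (and Theorem~\ref{Morrey}) it simply points the reader to \cite{DNPV}, so there is no in-paper argument to compare yours against. Your sketch is a correct reconstruction of the standard strategy found there: establish the full-space endpoint Gagliardo--Nirenberg--Sobolev inequality $\|u\|_{p_s^*}\le C[u]_{s,p}$ (applicable because $W^{s,p}_0(\Omega)\subset W^{s,p}(\R^N)$ under the paper's definition \eqref{def.fraccionario}), deduce every subcritical $q$ by H\"older on the finite-measure set $\Omega$, reduce the borderline $sp=N$ to $s'<s$ via $[u]_{s',p}\le C([u]_{s,p}+\|u\|_p)$ together with Poincar\'e, and obtain compactness through Fr\'echet--Kolmogorov in $L^p$ followed by log-convexity of the $L^q$-norms to lift it to every $q<p_s^*$. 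Apart from the Sobolev--Gagliardo inequality itself, which you rightly flag as the analytic core and for which you defer to \cite{DNPV}, the one ingredient you invoke without proof is the translation bound $\|u(\cdot+h)-u\|_p\le C|h|^s[u]_{s,p}$ driving the Fr\'echet--Kolmogorov step; this is true (it encodes the embedding $B^s_{p,p}\hookrightarrow B^s_{p,\infty}$) and the splitting of the Gagliardo double integral by $|x-y|$ small or large relative to $|h|$ together with Fubini that you describe is exactly how one proves it, so there is no genuine gap. The proposal is correct and consistent with the reference the paper relies on.
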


When $sp>N$ we have Morrey-type estimates. That is
\begin{teo}\label{Morrey}
Let $\Omega$ be bounded and $sp>N$. Then any $u\in W^{s,p}_0(\Omega)$ admits a continuous representative and
$$
[u]_\alpha \le C [u]_{s,p},
$$
for some constant $C$ depending on $N, s, p$ and $\Omega$. Here $\alpha=s-\tfrac{N}{p}$ and $[\, \cdot\, ]_\alpha$ is the H\"older semi-norm, defined as
$$
[u]_\alpha:= \sup_{x, y\in \Omega} \frac{|u(x)-u(y)|}{|x-y|^\alpha}.
$$
\end{teo}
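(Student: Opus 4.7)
The plan is to establish the Hölder estimate through a dyadic chaining argument on ball averages, which is the nonlocal analogue of the classical Campanato--Morrey approach.

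First, I would prove a local ball--to--ball comparison lemma: if $B\subset\R^N$ is a ball of radius $r$ and $B'\subset B$ satisfies $|B|\le c|B'|$, then two successive applications of Jensen's inequality give
$$|u_B - u_{B'}|^p \le \frac{1}{|B|\,|B'|}\iint_{B\times B}|u(z)-u(w)|^p\,dz\,dw.$$
Since $|z-w|\le 2r$ throughout $B\times B$, inserting the factor $|z-w|^{N+sp}$ and bounding it by $(2r)^{N+sp}$ yields
$$|u_B - u_{B'}|^p \le C\, r^{sp-N}\,[u]_{s,p;B}^p \le C\, r^{sp-N}\,[u]_{s,p}^p,$$
so $|u_B-u_{B'}|\le C\, r^\alpha\, [u]_{s,p}$ with $\alpha = s-\tfrac{N}{p}>0$.

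Second, for any Lebesgue point $x$ of $u$ I would apply the lemma to the dyadic tower $B_k := B_{r/2^k}(x)$. Telescoping
$$u_{B_0} - u(x) = \sum_{k\ge 0}(u_{B_k}-u_{B_{k+1}})$$
and bounding each summand by $C(r/2^k)^\alpha [u]_{s,p}$ gives a convergent geometric series (this is exactly where $sp>N$ is used), yielding
$$|u(x) - u_{B_r(x)}| \le C\, r^\alpha\, [u]_{s,p}.$$
Since the map $x\mapsto u_{B_r(x)}$ is continuous and the bound is uniform in $x$, $u$ admits a continuous representative.

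Third, for two points $x,y\in\Omega$ with $r=|x-y|$, I would compare both values to the average of $u$ over the common ball $B=B_{2r}(x)$, which contains $y$. The term $|u(x)-u_B|$ is controlled by step two applied with radius $2r$. For $|u(y)-u_B|$, I chain from $u(y)$ through the balls $B_{r/2^k}(y)$ up to $B_r(y)\subset B_{2r}(x)=B$, and then apply the ball--to--ball lemma once more to pass from $u_{B_r(y)}$ to $u_B$, since these two balls have comparable volume. Each stage contributes $C\, r^\alpha\, [u]_{s,p}$, and summing gives the desired H\"older estimate.

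The main technical obstacle is the ball--to--ball comparison: one must carefully track the scaling in $r$ in the double Jensen bound to obtain the correct exponent $sp-N$, since this is precisely the step where the assumption $sp>N$ enters, determining both the H\"older exponent $\alpha=s-N/p$ and the convergence of the geometric chain in step two.
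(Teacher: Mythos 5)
The paper does not include a proof of Theorem~\ref{Morrey}; it defers to \cite{DNPV}, and your Campanato-type chaining argument via nested ball averages and a double application of Jensen's inequality is correct and is essentially the standard proof given there. All the key steps check out: the ball-to-ball comparison yields the scaling $r^{s-N/p}$, the dyadic telescope converges precisely because $sp>N$ makes the exponent $\alpha=s-N/p$ positive, and the two-point oscillation estimate follows by chaining both points to the mean over a common ball of comparable radius.
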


The proof of all the above mentioned results on fractional order Sobolev spaces, can be found, for instance, in \cite{DNPV}.

To describe the behavior of the spaces $W^{s,p}$ when the fractional parameter $s$ grows to 1 we use the celebrated results of \cite{BBM}. These results can be summarized as follows
\begin{teo}\label{gamma.BBM}
Let $\Omega$ be bounded and $1<p<\infty$. For any $0<s<1$ we define $F_s\colon L^p(\Omega)\to \bar\R$ as
$$
F_s(u) = \begin{cases}
(1-s)[u]_{s,p}^p & \text{if } u\in W^{s,p}_0(\Omega),\\
\infty & \text{otherwise}.
\end{cases}
$$
Moreover, consider $F_0\colon L^p(\Omega)\to \bar\R$ as
$$
F_0(u) = \begin{cases}
K \|\nabla u\|_p^p & \text{if } u\in W^{1,p}_0(\Omega),\\
\infty & \text{otherwise},
\end{cases}
$$
where 
$$
K = K(N,p) =\frac{1}{p}\int_{|z|=1} |z_N|^p\, dS.
$$
Then for every sequence $s_n\to 1$, it follows that $F_{s_n}\stackrel{\Gamma}{\to} F_0$.
\end{teo}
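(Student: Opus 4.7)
The plan is to verify the two inequalities in the definition of Gamma-convergence separately, leveraging the classical pointwise BBM limit for the upper bound and the companion lower semicontinuity (liminf) result due to Ponce for the lower bound. Throughout, I identify every $u\in W^{s,p}_0(\Omega)$ with its zero extension to $\R^N$, so that the Gagliardo seminorm is always computed on $\R^N\times\R^N$.

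For the $\limsup$ inequality, fix $u\in L^p(\Omega)$. If $u\notin W^{1,p}_0(\Omega)$ then $F_0(u)=+\infty$ and there is nothing to prove; otherwise, I would simply take the constant recovery sequence $y_n=u$. Since $u\in W^{1,p}_0(\Omega)$ its zero extension lies in $W^{1,p}(\R^N)$, so the original Bourgain--Brezis--Mironescu pointwise convergence applies directly and yields
$$
\lim_{n\to\infty}(1-s_n)[u]_{s_n,p}^p = K\|\nabla u\|_{L^p(\R^N)}^p = K\|\nabla u\|_p^p = F_0(u),
$$
where I used that $\nabla u=0$ a.e.\ outside $\Omega$. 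Trivially $y_n\to u$ in $L^p(\Omega)$, so the $\limsup$ inequality holds.

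For the $\liminf$ inequality, let $u_n\to u$ in $L^p(\Omega)$. If $\liminf_n F_{s_n}(u_n)=+\infty$ there is nothing to show, so I may pass to a subsequence along which the $\liminf$ is a finite limit, hence $u_n\in W^{s_n,p}_0(\Omega)$ with $(1-s_n)[u_n]_{s_n,p}^p$ uniformly bounded. Extending each $u_n$ by zero to $\R^N$, the convergence $u_n\to u$ takes place in $L^p(\R^N)$, and since each $u_n$ vanishes outside $\Omega$ the limit $u$ also vanishes a.e.\ outside $\Omega$. I would then invoke the Gamma-liminf companion of the BBM result (proved, for instance, by A.~Ponce in his work on a new approach to Sobolev spaces): if $v_n\to v$ in $L^p(\R^N)$ and $\sup_n (1-s_n)[v_n]_{s_n,p}^p<\infty$, then $v\in W^{1,p}(\R^N)$ and
$$
K\|\nabla v\|_{L^p(\R^N)}^p \leq \liminf_{n\to\infty}(1-s_n)[v_n]_{s_n,p}^p.
$$
Applied to our sequence, this shows $u\in W^{1,p}(\R^N)$ and, combined with $u=0$ a.e.\ outside $\Omega$, we conclude $u\in W^{1,p}_0(\Omega)$ and $F_0(u)\leq \liminf_n F_{s_n}(u_n)$.

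The main obstacle is clearly the $\liminf$ direction. The $\limsup$ is a one-line consequence of the classical pointwise BBM statement, but the $\liminf$ cannot be reduced to pointwise convergence and genuinely requires the fractional-to-local compactness/lower semicontinuity machinery: concretely, a Fréchet--Kolmogorov type argument exploiting the uniform control of $L^p$-translations coming from the bound on $(1-s_n)[u_n]_{s_n,p}^p$, followed by a localization step that transfers the fractional seminorm bound into a genuine gradient bound on the limit. All of this is available in the literature cited in the paper (\cite{BBM} and its developments), so from the authors' perspective the argument reduces to assembling these pieces and verifying that the Dirichlet constraint passes to the limit, which it does because zero extension is preserved under $L^p$-convergence.
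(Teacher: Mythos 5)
Your proposal is correct and matches the paper's treatment: the paper does not give an independent proof of Theorem~\ref{gamma.BBM} but simply presents it as a summary of \cite{BBM} (and points to \cite{Ponce} for the $\Gamma$-convergence formulation), and the accompanying remark records exactly your observation that the pointwise BBM limit furnishes a constant recovery sequence for the $\limsup$ inequality, while the $\liminf$ inequality is the Ponce-type lower semicontinuity bound. One small caveat worth keeping in mind when you assemble the pieces: to pass from ``$u\in W^{1,p}(\R^N)$ and $u=0$ a.e.\ outside $\Omega$'' to ``$u\in W^{1,p}_0(\Omega)$'' you implicitly use that these two descriptions of $W^{1,p}_0(\Omega)$ coincide, which requires some regularity of $\partial\Omega$ (e.g.\ Lipschitz); this is consistent with the paper, which works with the analogous zero-extension definition \eqref{def.fraccionario} and imposes Lipschitz regularity where the theorem is actually applied (Theorem~\ref{teo.BPS}).
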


See also \cite{Ponce} for more results regarding the connection of fractional norms and Gamma convergence.

\begin{remark}
In \cite{BBM} it is also shown that the functionals $F_s$ in fact converges pointwise to $F_0$, so in the definition of Gamma convergence one can take as a recovery sequence $\{u_n\}_{n\in\N}$ a constant sequence.
\end{remark}

\subsection{Hausdorff distance}

Let $(X,d)$ be a metric space and consider 
$$
\K = \K(X) = \{K\subset X\colon K \text{ compact}\}.
$$ 
In $\K$ we define the Hausdorff distance, as $\dh\colon \K\times\K\to \R$
$$
\dh(A, B) = \max\left\{\sup_{x\in A} d(x, B); \sup_{x\in B} d(x, A)\right\}.
$$

\begin{remark}\label{remK}
It is a well known fact that $(\K, \dh)$ is a metric space and that $(\K, \dh)$ is compact if $(X,d)$ is compact.
\end{remark}

\begin{remark}\label{remK2}
Observe that if $Y\subset X$ is a subspace, then $\K(Y)$ is a subspace of $\K(X)$. Hence, an important consequence of Remark \ref{remK} is that  if $Y$ is compact, then $\K(Y)= \{K\in \K(X)\colon K\subset Y\}$ is a compact set of $\K(X)$.
\end{remark}

\begin{remark}
Two particular important examples of the situation described in Remark \ref{remK2} are the followings:
\begin{itemize}
\item Take $1\le sq\le N$ and $1\le p<q_s^*$. Then consider $X=L^p(\Omega)$ and $Y=\{u\in W^{s,q}_0(\Omega)\colon [u]_{s,q}\le C\}$ for some constant $C>0$. Then, by Theorem \ref{Sobolev.immersion}, $Y\subset X$ is compact. Hence $\K(Y)$ is a compact subspace of $\K(X)$.

\item Take $sq>N$, $X=C_0(\Omega)$ (the closure of $C_c(\Omega)$ with respect to $\|\cdot\|_\infty$) and $Y=\{u\in W^{s,q}_0(\Omega)\colon [u]_{s,q}\le C\}$ for some constant $C>0$. Then, by Theorem \ref{Morrey}, $Y\subset X$ is compact and hence $\K(Y)$ is a compact subspace of $\K(X)$.
\end{itemize}
\end{remark}

Then next proposition, that characterizes the convergence in $\dh$, will be most useful. 
\begin{prop}\label{conv.dh}
Let $\{K_n\}_{n\in\N}\subset \K$ and $K\in \K$. Then $K_n\stackrel{\dh}{\to} K$ if and only if
\begin{enumerate}
\item for each sequence $\{x_n\}_{n\in\N}\subset X$, such that $x_n\in K_n$ for all $n\in\N$, any accumulation  point $x\in X$ of $\{x_n\}_{n\in\N}$ belongs to $K$,

\item for each $x\in K$, there exists $\{x_n\}_{n\in\N}\subset X$ such that $x_n\in K_n$ for all $n\in\N$ and $x_n\to x$.
\end{enumerate}
\end{prop}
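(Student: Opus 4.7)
The plan is to prove both implications directly from the definition of the Hausdorff distance
$$
\dh(A,B) = \max\Bigl\{\sup_{x\in A} d(x,B),\ \sup_{x\in B} d(x,A)\Bigr\},
$$
exploiting compactness of $K$ and of each $K_n$ to realize distances to these sets as minima attained at points of the sets.

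For the forward implication ($\Rightarrow$), assume $\dh(K_n, K)\to 0$. To establish (1), suppose $x_{n_k}\to x$ with $x_{n_k}\in K_{n_k}$. Since $d(x_{n_k}, K)\le \dh(K_{n_k}, K)\to 0$ and the map $y\mapsto d(y,K)$ is $1$-Lipschitz, passing to the limit gives $d(x,K)=0$, whence $x\in K$ because $K$ is closed. To establish (2), given $x\in K$, we have $d(x, K_n)\le \sup_{y\in K} d(y, K_n)\le \dh(K_n, K)\to 0$; since $K_n$ is compact the infimum defining $d(x, K_n)$ is attained at some $x_n\in K_n$, and this sequence satisfies $x_n\to x$.

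For the reverse implication ($\Leftarrow$), I would argue by contradiction. If $\dh(K_n, K)\not\to 0$, then along some subsequence either (A) $\sup_{x\in K_{n_k}} d(x,K) > \ve$ or (B) $\sup_{x\in K} d(x, K_{n_k}) > \ve$, for some fixed $\ve>0$. In case (A), compactness of each $K_{n_k}$ gives $x_{n_k}\in K_{n_k}$ with $d(x_{n_k}, K)>\ve$; extracting a convergent subsequence $x_{n_{k_j}}\to x$ and applying (1) forces $x\in K$, but continuity of $d(\cdot, K)$ gives $d(x, K)\ge \ve$, a contradiction. In case (B), compactness of $K$ yields $x_{n_k}\in K$ with $d(x_{n_k}, K_{n_k})>\ve$, and after passing to a subsequence we may assume $x_{n_k}\to x\in K$; then (2) produces $y_n\in K_n$ with $y_n\to x$, whence $d(x_{n_k}, K_{n_k})\le d(x_{n_k}, y_{n_k})\to 0$, contradicting $d(x_{n_k}, K_{n_k}) >\ve$.

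The only nontrivial point I expect is the extraction of a convergent subsequence in case (A): in full generality this is not automatic, but in the setting in which the proposition is applied, the compact sets $K_n$ all lie inside a common compact subset $Y\subset X$ (see Remark \ref{remK2}), so the sequence $\{x_{n_k}\}\subset Y$ is automatically precompact. Apart from this observation, the proof is a direct matching of the $\sup$ in the definition of $\dh$ against the two conditions, one condition controlling each of the two sups.
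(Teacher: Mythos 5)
Your overall plan is correct and essentially the only reasonable one: the forward direction and case (B) of the reverse implication are both handled cleanly, and the paper itself offers no proof of this proposition (it merely declares it a straightforward consequence of the definitions), so there is no "paper argument" to match against. The one thing worth stressing is that the gap you flag in case (A) is not merely a technical inconvenience --- the proposition as literally stated is \emph{false} in a general metric space. Take $X=\R$, $K=\{0\}$, $K_n=\{0,n\}$: condition (1) holds vacuously (the sequence $x_n=n$ has no accumulation point, and any sequence with an accumulation point must eventually hit $0$), condition (2) holds by taking $x_n\equiv 0$, yet $\dh(K_n,K)=n\to\infty$. So your fix is not optional: some hypothesis forcing the $K_n$ to live in a common (pre)compact set is genuinely needed to run case (A), and you are right that this is automatic in every application in the paper, since the $K_n$ are uniformly Sobolev-bounded and hence contained in a fixed compact subset of $L^{p_0}(\Omega)$ (or $C_0(\Omega)$) via the compact embeddings; see Remark~\ref{remK2}. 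Your proof is therefore correct under that additional standing hypothesis, and in pointing this out you have actually caught an imprecision in the paper's statement that the paper itself does not address.

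Two small technical remarks. In the forward direction, part (2), you should note that $d(x,K_n)$ being attained at $x_n\in K_n$ uses compactness of $K_n$, which you do; and $x_n\to x$ follows because $d(x,x_n)=d(x,K_n)\le \dh(K_n,K)\to 0$. In case (B) of the reverse implication, after extracting $x_{n_k}\to x$ inside the compact set $K$, the estimate $d(x_{n_k},K_{n_k})\le d(x_{n_k},y_{n_k})\le d(x_{n_k},x)+d(x,y_{n_k})\to 0$ does the job; this is fine as written.
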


The proof of this proposition is a straightforward consequence of the definitions.

Suppose now that $X$ is also a Banach space. Then consider the subspace of $\K$ of {\em symmetric} compact subsets, i.e.
$$
\Ks := \{K\in\K\colon K=-K\}.
$$
An immediate consequence of Proposition \ref{conv.dh} is that $\Ks$ is a closed subspace of $\K$.

\section{The abstract result}

Let $\{F_n\}_{n\in\N}$ be a family of functionals defined in $L^1(\Omega)$ with values  in $[0,\infty]$ such that:
\begin{enumerate}
\item[(A1)]\label{A1} For each $n\in\N$, $F_n$ is convex and 1-homogeneous.

\item[(A2)]\label{A2} There exist two constants $0<\alpha<\beta$ such that for any $n\in\N$ there exist $p_n\in[1,\infty)$  and $s_n\in(0,1)$, such that
\begin{align*}
\alpha(1-s_n)^\frac{1}{p_n}[v]_{s_n,p_n}\leq &F_n(v)\leq\beta(1-s_n)^\frac{1}{p_n}[v]_{s_n,p_n}\mbox{  if } v\in W_0^{s_n,p_n}(\Omega),\\
& F_n(v)=+\infty\mbox{ otherwise.}
\end{align*}

\item[(A3)] There exists $p_0 \in [1,\infty]$ and $s_0\in (0,1]$ such that the sequences $\{p_n\}_{n\in\N}$ and $\{s_n\}_{n\in\N}$ given in (A2) verify that $p_n\to p_0$ and $s_n\to s_0$ as $n\to\infty$. Moreover, there exists $F_0\colon L^1(\Omega)\to [0,\infty]$ such that  $F_n$ $\Gamma-$converges in $L^{p_0}(\Omega)$ (or $C_0(\Omega)$ if $p_0=\infty$) to  $F_0$.
\end{enumerate}

We define
\begin{align*}
\G^{k}_{s,p}=\left\{G\subset W_0^{s,p}(\Omega)\colon \begin{array}{l} G=-G \mbox{ closed and
bounded in }W_0^{s,p}(\Omega) \\
\|u\|_p=1,\ \forall u\in G \mbox{ and }\gamma(G)\geq k\end{array}\right\}.
\end{align*}
For any $k\in\N$ we associate with $F_n$ the functional
$J_n^{k}\colon \Ks(\Omega)\to[0,\infty]$ given by
$$
J_n^k(G) := \begin{cases}
\sup_{v\in G} F_n(v), & G\in \G^k_{s_n,p_n}; \\
+\infty, & \text{otherwise.} 
\end{cases}
$$
where $\Ks(\Omega)$ is the set of compact symmetric subsets of $L^{p_0}(\Omega)$ (or $C_0(\Omega)$ if $p_0=+\infty$).

We define the $k-$th eigenvalue of the functional $F_n$ as
$$
\lambda_n^k := \inf_{G\in \Ks(\Omega)} J_n^{k}(G),
$$

So, the main result of the paper is the following.
\begin{teo}\label{main.teo}
Let $k$ be a positive integer, and assume that $\{F_n\}_{n\in\N}$ satisfies {\em (A1)-(A3)}. Then the sequence $\{J_n^{k}\}_{n\in\N}$ is equicoercive, it verifies the $\liminf$ inequality \eqref{liminf}  and
$$
\lim_{n\to\infty}\left(\inf_{G\in \Ks(\Omega)} J_n^{k}(G)\right) = \inf_{G\in \Ks(\Omega)}  J_0^{k}(G).
$$
\end{teo}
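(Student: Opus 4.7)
The plan is to follow the Champion--De Pascale approach and split the argument into three parts: equicoercivity of $\{J_n^k\}$ in $\Ks(\Omega)$, the $\liminf$ inequality along Hausdorff-convergent sequences of sets, and a matching upper bound for the infima obtained by a recovery sequence at the level of sets (not just of individual functions). For equicoercivity, suppose $J_n^k(G_n)\le c$. Then $G_n\in\G^k_{s_n,p_n}$ and (A2) gives $(1-s_n)^{1/p_n}[v]_{s_n,p_n}\le c/\alpha$ and $\|v\|_{p_n}=1$ for every $v\in G_n$. The uniform bound together with the Sobolev/Morrey embeddings of Theorems \ref{Sobolev.immersion} and \ref{Morrey} (and BBM-type estimates when $s_n\to 1$, to absorb the factor $(1-s_n)^{1/p_n}$) produce a single compact set $Y\subset L^{p_0}(\Omega)$ (or $C_0(\Omega)$ if $p_0=\infty$) containing $\bigcup_n\bigcup_{G_n\in\{J_n^k\le c\}}G_n$. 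By Remark \ref{remK2}, $\K(Y)$ is compact in $\K(\Omega)$, hence so is $\{J_n^k\le c\}\cap \Ks(\Omega)$.

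\textbf{$\liminf$ inequality.} Let $G_n\xrightarrow{\dh}G$ in $\Ks(\Omega)$ and set $L:=\liminf_n J_n^k(G_n)$; we may assume $L<\infty$ and, after extracting, that the $\liminf$ is a limit. For each $u\in G$ Proposition \ref{conv.dh} supplies $u_n\in G_n$ with $u_n\to u$ in $L^{p_0}(\Omega)$ (or $C_0(\Omega)$). Since $\|u_n\|_{p_n}=1$ and $p_n\to p_0$, continuity of the $L^q$ norms under this convergence gives $\|u\|_{p_0}=1$. The $\liminf$ half of $F_n\xrightarrow{\Gamma}F_0$ then yields $F_0(u)\le\liminf_n F_n(u_n)\le L$, so $\sup_{u\in G}F_0(u)\le L$. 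Symmetry passes to the Hausdorff limit, and the genus bound is handled with Proposition \ref{prop.genus}: choose a symmetric neighborhood $N$ of $G$ with $\gamma(\bar N)=\gamma(G)$; Hausdorff convergence forces $G_n\subset N$ for $n$ large, so $k\le\gamma(G_n)\le\gamma(\bar N)=\gamma(G)$. Thus $G$ is admissible for $J_0^k$ and $J_0^k(G)\le L$.

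\textbf{Convergence of infima and main obstacle.} The inequality $\inf J_0^k\le\liminf_n \inf J_n^k$ is immediate from the two previous steps: take quasi-minimizers $G_n$, extract a Hausdorff-accumulation point $G$ by equicoercivity, and apply the $\liminf$ inequality. The delicate point is the reverse inequality. Given an admissible $G_0$ with $J_0^k(G_0)$ close to $\inf J_0^k$, the idea is to exploit the $1$-homogeneity (A1) and define
\[
G_n:=\bigl\{u/\|u\|_{p_n}\;:\;u\in G_0\bigr\},
\]
whenever $G_0$ lies in the domain of $F_n$ (the typical applications, e.g.\ $s_n\to 1$ via Theorem \ref{gamma.BBM} or $p_n\to\infty$, provide a natural inclusion after a mild truncation/mollification). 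The map $u\mapsto u/\|u\|_{p_n}$ is continuous and odd, so $G_n$ is compact, symmetric, satisfies $\|\cdot\|_{p_n}=1$, and has $\gamma(G_n)\ge\gamma(G_0)\ge k$. Using $1$-homogeneity,
\[
J_n^k(G_n)=\sup_{u\in G_0}\frac{F_n(u)}{\|u\|_{p_n}},
\]
and the main obstacle is to upgrade the pointwise recovery property $\limsup F_n(u)\le F_0(u)$ (constant recovery sequence, cf.\ the remark after Theorem \ref{gamma.BBM}) to a uniform upper bound over the compact set $G_0$. This is done by a Dini-type argument based on the convexity of $F_n$ in (A1) and the equicoercivity from Step 1, together with the continuity of the $L^{p_n}\to L^{p_0}$ normalization. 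Taking $\limsup$ and letting $G_0$ approach the infimum concludes $\limsup_n\inf J_n^k\le\inf J_0^k$.
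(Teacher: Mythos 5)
Your Steps 1 and 2 (equicoercivity and $\liminf$) match the paper's proof in spirit and in essential detail: you invoke the BBM-type comparison $(1-t)^{1/q}[v]_{t,q}\le C(1-s_n)^{1/p_n}[v]_{s_n,p_n}$ to place every admissible $G_n$ inside one fixed compact $Y$, you use Remark \ref{remK2} to upgrade this to compactness in $\Ks(\Omega)$, and for the $\liminf$ you combine Proposition \ref{conv.dh} with Proposition \ref{prop.genus} exactly as the paper does. You also correctly note that the two completed parts plus equicoercivity immediately yield $\inf J_0^k \le \liminf_n \inf J_n^k$.

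The gap is in Step 3, and it is a real one, not a matter of polish. You define $G_n=\{u/\|u\|_{p_n}\colon u\in G_0\}$ and then try to control $\sup_{u\in G_0}F_n(u)/\|u\|_{p_n}$ by "upgrading" a pointwise bound $\limsup_n F_n(u)\le F_0(u)$ (constant recovery sequence) to a uniform one via a "Dini-type argument." Two things go wrong. First, $G_0\subset W^{s_0,p_0}_0(\Omega)$ need not be contained in the domain of $F_n$ (which is $W^{s_n,p_n}_0(\Omega)$), and $\Gamma$-convergence does \emph{not} give constant recovery sequences in general --- the remark after Theorem \ref{gamma.BBM} is specific to the BBM setting; invoking a "mild truncation/mollification" is a placeholder, not an argument, since after modification you no longer have an a priori bound on $\gamma$ or on $\|\cdot\|_{p_n}$. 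Second, even granting all of that, there is no monotonicity in $n$ here, so Dini does not apply; nor is there any uniform equicontinuity of $F_n$ on $G_0$ one could use instead.

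What the paper actually does to bridge this is the heart of the proof and uses the convexity hypothesis (A1) in an essential way. It covers the compact set $G_0$ by finitely many balls with centers $u^1,\dots,u^m$, takes recovery sequences $u^i_n\to u^i$ with $F_n(u^i_n)\to F_0(u^i)$ \emph{only for these finitely many points}, forms the closed symmetric convex hull $C_n=\overline{\mathrm{Co}\{\pm u^i_n\}}$, and defines $G_n$ to be the $L^{p_n}$-normalization of $\Pi_n(G_0)$, where $\Pi_n$ is the metric projection onto $C_n$ in an $L^q$ norm with $q<p_0$ (uniform convexity is needed, hence $q<\infty$). The projection is $1$-Lipschitz and odd, so the genus is preserved; one shows $\Pi_n(G_0)$ stays uniformly away from $0$ in $\|\cdot\|_q$; and then convexity and $1$-homogeneity of $F_n$ give the crucial bound $\sup_{C_n}F_n\le\max_i F_n(u^i_n)$, which converges to $\max_i F_0(u^i)\le\sup_{G_0}F_0$. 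This finite-cover + projection-onto-convex-hull mechanism is precisely what replaces the uniform recovery you are missing, and without it the argument does not close.
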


This result is a generalization of the result of \cite{ChDP} to the fractional setting. The strategy of the proof very much resembles the one in the above mentioned paper. The main modification is done in the first step of the proof.

Nevertheless, we include full details of the proof for the reader's convenience.

\begin{proof}
The proof is divided into three steps.

\medskip

{\em Step1:} equicoercivity. 
\smallskip

Assume first that $p_0<\infty$. If $G\in\{J_n^k \leq\mu\}$ then $G\in \G^{k}_{s_n,p_n}$ and 
\begin{equation}\label{Fnmu}
F_n(v)\leq\mu \quad \text{for all } v\in G. 
\end{equation}

Assume that $p_0\le N$ therefore, for $\delta>0$ small enough we take $q=p_0-\delta$ and $0<t<s_0$ such that
\begin{equation}\label{compact}
q_t^* := \frac{Nq}{N-tq}>p_0.
\end{equation}
Therefore, there exists $n_0\in\N$ such that $t<s_n$ and $q\le p_n$ for every $n\ge n_0$. So we have that $W^{s_n,p_n}_0(\Omega)\subset W^{t,q}_0(\Omega)$ and there exists a constant $C$ depending on $t, q$ and $\Omega$ but independent on $n\ge n_0$ such that
\begin{equation}\label{bbm1}
(1-t)^\frac{1}{q}[v]_{t,q}\le C (1-s_n)^\frac{1}{p_n}[v]_{s_n,p_n},
\end{equation}
for every $n\ge n_0$. See \cite{BBM}.

Using (A2), \eqref{Fnmu} and \eqref{bbm1} we have $(1-t)^\frac{1}{q}[v]_{t,q}\leq C$ for every $v\in G$ where $C$ depends only on $\mu, t, q$ and $\Omega$. We define
$$
K := \{v\in L^{p_0}(\Omega): (1-t)^\frac{1}{q}[v]_{t,q}\leq C\}.
$$
By the Sobolev embedding, from \eqref{compact}, we obtain that $K$ is compact in $L^{p_0}(\Omega)$, so $\{J_n^k \leq\mu\}\subset \{G\in \Ks(\Omega)\colon G\subset K\}$ which is a compact subset of $\Ks(\Omega)$, so that the family $J_n^{k}$ is equicoercive.

If $p_0>N$, we can take $N<q<p_0$ and use the exact same argument using Morrey's inequality (see \cite{DNPV}) instead of the Sobolev embedding.

The case with $p_0=\infty$ follows again easily from Morrey's inequality.

\medskip

{\em Step 2:}  $\Gamma-\liminf$.
\smallskip

 Let $G_0\in \Ks(\Omega)$ and $\{G_n\}_{n\in\N}\subset \Ks(\Omega)$ be a sequence such that $G_n\to G_0$
 in Hausdorff distance. We shall prove that
$$\liminf_{n\to\infty} J_n^{k}(G_n)\geq J_0^{k}(G_0).
$$
 
Without loss of generality, we may assume that there exists a constant $C>0$ such that
 $J_n^{k}(G_n)<C$ for every $n\in\N$. Observe that this implies that $G_n\in\G^k_{s_n,p_n}$. 
 
We will show $\gamma(G_0)\geq k$. To this end, take an open neighborhood $N$ of $G_0$ in $L^{p_0}(\Omega)$ such that $\gamma(G_0)=\gamma(\overline{N})$. Since $G_n\to G_0$ in Hausdorff distance, $G_n\subset N$ for $n$ large enough. Therefore, by the monotonicity of the genus we get
 $$
 k\leq\gamma(G_n)\leq\gamma(\overline{N})=\gamma(G_0).
$$
Now, for any $u\in G_0$ there exists a sequence $u_n\in G_n$ such that
$u_n\to u$ in $L^{p_0}(\Omega)$. By assumption (A3) we have that
$$
F_0(u)\leq\liminf_{n\to\infty}
F_n(u_n)\leq\liminf_{n\to\infty}\sup_{v\in G_n}F_n=\liminf_{n\to\infty} J_n^{k}(G_n)
$$
for all $u\in G_0$. Taking supremum we obtain the desired result.

\medskip

{\em Step 3:} It only remains to prove that
$$
\limsup_{n\to\infty} \left(\inf_{G\in \Ks(\Omega)} J_n^k(G)\right) \leq \inf_{G\in \Ks(\Omega)}J_0^{k}(G).
$$
Assume first that $p_0>1$. 

We fix $\delta>0$ small and assume first that $p_0<\infty$. Let $G_0\in \Ks(\Omega)$ be such that
$$
\inf_{G\in \Ks(\Omega)}J_0^{k}(G)\geq J_0^{k}(G_0)-\delta
$$
Since $G_0$ is compact in $L^{p_0}(\Omega)$, there exist $u^1,u^2,\dots,u^m\in G_0$ such that 
$$
G_0\subset \bigcup_{i=1}^{m} B_{L^{p_0}(\Omega)}(u^i, \tfrac15\delta).
$$ 
Since $F_n\stackrel{\Gamma}{\to} F_0$, there exist $\{u^{i}_n\}_{n\in\N}\subset L^{p_0}(\Omega)$ such that  $u^{i}_n\to u^{i}$ in $L^{p_0}(\Omega)$ as $n\to\infty$ and $F_n(u^{i}_n)\to F_0(u^{i})$ as $n\to\infty$ for every $i=1,\dots,m$. In particular, this implies that $u^i_n\in W^{s_n,p_n}_0(\Omega)$ for every $n\in\N$ and every $i=1,\dots,m$.

Next, for every $n\in \N$, we define
$$
C_n=\overline{Co(\{\pm u^{i}_n; i=1,\dots,m\})}.
$$
Note that $C_n\subset L^{p_0}(\Omega)$ is compact. 

Observe that from the Dominated Convergence Theorem,
$$
\lim_{q\uparrow p_0}\int_\Omega |v|^q\,dx =\int_\Omega |v|^{p_0}\,dx
$$ 
for every $v\in L^{p_0}(\Omega)$. Therefore, there exists $q_0<p_0$ such that
$$
\|u^i\|_q\ge \|u^i\|_{p_0} - \tfrac15\delta =  1-\tfrac{1}{5}\delta \mbox{ for  } i=1,\dots, m
$$
for every $q_0<q<p_0$.

 We denote by $\Pi_n$ the projection onto $C_n$, for the norm of $L^{q}(\Omega)$.  That is $\Pi_n\colon L^{q}(\Omega)\to L^{q}(\Omega)$ such that $\Pi_n v\in C_n$ and
$$
\|\Pi_n v - v\|_q\le  \inf_{u\in C_n} \|u-v\|_q.
$$
Observe that $\Pi_n$ is well defined since $C_n$ is compact and $\|\cdot\|_q$ is uniformly convex. Moreover, $\Pi_n$ is Lipschitz with Lipschitz constant 1.

Now, we want to prove that $\Pi_n(G_0)$ is far from $0$. To this end, if $v\in G_0$, we have that there exists $i\in \{1,\dots,m\}$ such that $\|v-u^i\|_{p_0}< \tfrac15\delta$. Therefore
$$
\|\Pi_n v\|_q \ge \|u^{i}_n\|_q - \|\Pi_n u^{i} - u^{i}_n\|_q - \|\Pi_n v - \Pi_n u^{i}\|_q.
$$
First, we compute the last term
$$
\|\Pi_n v - \Pi_n u^{i}\|_q \le \|v-u^{i}\|_q \le \|v-u^i\|_{p_0} |\Omega|^{\frac{1}{q}-\frac{1}{p_0}}<\tfrac15\delta  |\Omega|^{\frac{1}{q}-\frac{1}{p_0}}.
$$
For the second term, 
$$
\|\Pi_n u^{i} - u^{i}_n\|_q\le \|\Pi_n u^{i} - u^{i}\|_q+ \|u^{i}-u^{i}_n\|_q\le 2\|u^{i}-u^{i}_n\|_q\to 0 \quad \text{as }n\to\infty.
$$
The first term
$$
\|u^{i}_n\|_q\to\|u^{i}\|_q>1-\tfrac{1}{5}\delta.
$$
Hence, choosing $q$ close enough to $p_0$ and $n$ large enough, we obtain
$$
\|\Pi_n v\|_q\geq 1-\tfrac12 \delta \text{ for all } v\in G_0.
$$
So, $\Pi_n(G_0)\subset C_n-B(0,1-\frac{\delta}{2}).$ On the other hand, since $C_n\in \Ks(\Omega)$, it follows that $\Pi_n(G_0)\in \Ks(\Omega)$ and $\gamma(\Pi_n(G_0))\geq k$, therefore if we define
$$
G_n:= \left\{\frac{v}{\|v\|_{p_n}}\colon v\in \Pi_n(G_0)\right\},
$$
we get that $G_n\in \G^k_{s_n,p_n}(\Omega)$.

Now, using that $1-\frac{\delta}{2}\leq\|v\|_q\leq\|v\|_{p_n}|\Omega|^{\frac{1}{q}-\frac{1}{p_n}}$, for every $v\in \Pi_n(G_0)\subset C_n$, we obtain
\begin{align*}
F_n\left(\frac{v}{\|v\|_{p_n}}\right)&=\frac{1}{\|v\|_{p_n}}F_n(v) \le\frac{|\Omega|^{\frac{1}{q}-\frac{1}{p_n}}}{1-\frac{\delta}{2}}F_n(v)\\
&\leq\frac{|\Omega|^{\frac{1}{q}-\frac{1}{p_n}}}{1-\frac{\delta}{2}}\sup_{C_n}F_n \leq\frac{|\Omega|^{\frac{1}{q}-\frac{1}{p_n}}}{1-\frac{\delta}{2}}\max_{1\leq i\leq m}F_n(u^{i}_m).
\end{align*}
So,
$$
J_n^{k}(G_n)\leq\frac{|\Omega|^{\frac{1}{q}-\frac{1}{p_n}}}{1-\frac{\delta}{2}}\max_{1\leq i\leq
m}F_n(u^{i}_m).
$$ 
As a consequence,
\begin{align*}
\limsup_{n\to\infty}(\inf J_n^{k})&\leq \limsup_{n\to\infty}
J_n^{k}(G_n)\\
&\leq\frac{|\Omega|^{\frac{1}{q}-\frac{1}{p_0}}}{1-\frac{\delta}{2}}\max_{1\leq
i\leq m}F_0(u^{i})\\
&\leq\frac{|\Omega|^{\frac{1}{q}-\frac{1}{p_0}}}{1-\frac{\delta}{2}}\sup_{
G_0}F_0\\
&\leq\frac{|\Omega|^{\frac{1}{q}-\frac{1}{p_0}}}{1-\frac{\delta}{2}}(\inf
J_0^{k}+\delta).
\end{align*}
The conclusion of step 3  follows by letting $\delta$ go to $0$ and $q$ go to $p_0$.

The proof for the case $p_0=1$ is almost identical and the reader can check the details in \cite{ChDP}.
\end{proof}

\begin{remark}
In general it is not possible to show that the sets $G_n$ constructed in Step 3, converge in $\Ks$ to $G_0$, and that is why from the arguments given in the previous proof one cannot infer that $J_n^k$ $\Gamma-$converges to $J_0^k$.
\end{remark}

\section{Applications}

In this section we apply our abstract result to a large variety of eigenvalue problems. 

Some of the problems were previously consider in the literature and are fully understood. In that case we show how one can derive the same type of result with our general approach. 

In other cases, the problem has been treated before, but only partial answers are known prior to this work. In that case, by applying this general framework, we are able to fully analyze the convergence of the eigenvalues.

Finally, we also consider some problems that, up to our knowledge, were not consider previously.

\subsection{Eigenvalues of the fractional $p-$Laplacian}
The fractional $p-$laplacian operator is (formally) defined as 
$$
(-\Delta_p)^s u(x) = (1-s)\text{ p.v.}\int_{\R^N}\frac{|u(x)-u(y)|^{p-2}(u(x)-u(y))}{|x-y|^{N+sp}}\, dy.
$$
This operator can be seen as the gradient of the Gagliardo semi-norm, in the sense that, if
$$
\Phi_{s,p}\colon W^{s,p}_0(\Omega)\to \R,\qquad \Phi_{s,p}(u) := (1-s)[u]_{s,p}^p,
$$
then $(-\Delta_p)^s\colon W^{s,p}_0(\Omega)\to W^{-s,p'}(\Omega) := (W^{s,p}_0(\Omega))^*$ (the dual space of $W^{s,p}_0(\Omega)$) is defined as $(-\Delta_p)^s = \frac{1}{p}\Phi_{s,p}'$ (the Fr\'echet derivative).

The eigenvalue problem associated to this operator, is
\begin{equation}\label{eigen.sp}
\begin{cases}
(-\Delta_p)^s u = \lambda |u|^{p-2}u & \text{in }\Omega\\
u=0 & \text{in } \R^N\setminus\Omega.
\end{cases}
\end{equation}
This eigenvalue problem was studied by many authors and several properties were obtained that mimic the theory developed for the by now classical $p-$Laplace operator, namely
\begin{equation}\label{eigen.p}
\begin{cases}
-\Delta_p u = \lambda |u|^{p-2}u & \text{in }\Omega\\
u=0 & \text{on } \partial\Omega.
\end{cases}
\end{equation}

In particular, it is proved that the first eigenvalue for \eqref{eigen.sp} is given by the minimum of the associated Rayleigh quotient, i.e.
$$
\lambda^s_{1,p} = \inf_{v\in W^{s,p}_0(\Omega)}\frac{\Phi_{s,p}(v)}{\|v\|_p^p}.
$$
More generally, by means of the critical point theory, applying the concept of the Krasnoselskii genus (c.f. Section 2), a sequence of {\em variational eigenvalues} is constructed as
$$
\lambda^s_{k,p} = \inf_{G\in \G^k_{s,p}} \sup_{v\in G}\frac{\Phi_{s,p}(v)}{\|v\|_p^p},
$$
that is the analogous construction of the variational eigenvalues for \eqref{eigen.p}. This variational eigenvalues for \eqref{eigen.p} will be denoted by $\{\lambda_{k,p}\}_{k\in\N}$.

For this problem, in \cite{BPS}, the authors analyze the asymptotic behavior of problem \eqref{eigen.sp} as $s\to 1$. The main result of \cite{BPS} is
\begin{teo}[\cite{BPS}, Theorem 1.2]\label{teo.BPS}
Let $\Omega\subset \R^N$ be an open and bounded Lipschitz set. For any $1<p<\infty$ and $k\in \N$,
$$
\lim_{s\uparrow 1} \lambda^s_{k,p} = K \lambda_{k,p},
$$
where $K$ is the constant defined in Theorem \ref{gamma.BBM}.

Moreover, if $u_s$ is an eigenfunction of \eqref{eigen.sp} corresponding to the variational eigenvalue $\lambda^s_{k,p}$ and such that $\|u_s\|_p=1$, then there exists a subsequence $\{u_{s_n}\}_{n\in\N}\subset \{u_s\}_{s\in (0,1)}$ such that
$$
\lim_{n\to\infty}[u_{s_n} - u]_{t,q} = 0,\qquad \text{for every } p \le q < \infty \text{ and every } 0 < t < \frac{p}{q},
$$
where $u$ is an eigenfunction of \eqref{eigen.p} corresponding to the variational eigenvalue $\lambda_{k,p}$, such that $\|u\|_p=1$.
\end{teo}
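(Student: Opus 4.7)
The plan is to apply Theorem \ref{main.teo} with $p_n = p$ for all $n$, a sequence $s_n \uparrow 1$, and
$$
F_n(v) := \begin{cases} (1-s_n)^{1/p}[v]_{s_n,p} & \text{if } v \in W^{s_n,p}_0(\Omega), \\ +\infty & \text{otherwise.} \end{cases}
$$
Convexity and $1$-homogeneity (A1) are immediate, and (A2) holds with $\alpha = \beta = 1$. For (A3) take $p_0 = p$ and $s_0 = 1$; the BBM result (Theorem \ref{gamma.BBM}) gives $\Gamma$-convergence in $L^p(\Omega)$ of $F_n^p$ to $u \mapsto K\|\nabla u\|_p^p$ (extended by $+\infty$), and since $t \mapsto t^{1/p}$ is continuous and monotone on $[0,\infty]$, $\Gamma$-convergence is preserved, so $F_n \stackrel{\Gamma}{\to} F_0$ with $F_0(u) = K^{1/p}\|\nabla u\|_p$.

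\textbf{Convergence of eigenvalues.} Since $J_n^k(G)^p = \sup_{v \in G} \Phi_{s_n,p}(v)$, the abstract $k$-th eigenvalue of $F_n$ equals $(\lambda^{s_n}_{k,p})^{1/p}$, and the abstract $k$-th eigenvalue of $F_0$ equals $K^{1/p}\lambda_{k,p}^{1/p}$. Applying Theorem \ref{main.teo} and raising to the $p$-th power yields $\lambda^{s_n}_{k,p} \to K \lambda_{k,p}$; because the sequence $s_n \uparrow 1$ was arbitrary, this gives the full limit $\lim_{s \uparrow 1} \lambda^s_{k,p} = K\lambda_{k,p}$.

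\textbf{Eigenfunctions.} For a family of eigenfunctions $u_s$ associated with $\lambda^s_{k,p}$ and normalized by $\|u_s\|_p = 1$, the identity $F_n(u_{s_n})^p = \lambda^{s_n}_{k,p}$ together with the argument of Step 1 of the proof of Theorem \ref{main.teo} (BBM-type domination $(1-t)^{1/q}[u]_{t,q} \le C(1-s_n)^{1/p}[u]_{s_n,p}$ followed by Theorem \ref{Sobolev.immersion}) shows $\{u_{s_n}\}_{n\in\N}$ is relatively compact in $L^p(\Omega)$. Extract a subsequence $u_{s_n} \to u$ in $L^p$. By the $\liminf$ inequality $K\|\nabla u\|_p^p \le \liminf_n \lambda^{s_n}_{k,p} = K\lambda_{k,p}$, and $\|u\|_p = 1$. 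To identify $u$ as an eigenfunction of the local problem, pass to the limit in the weak formulation of \eqref{eigen.sp} tested against $\varphi \in C^\infty_c(\Omega)$, using the pointwise control of the kernel and dominated convergence on the difference quotients; the matching of the eigenvalue then forces $u$ to be a variational eigenfunction of \eqref{eigen.p} at level $\lambda_{k,p}$.

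\textbf{Fractional seminorm convergence and main obstacle.} For $p \le q < \infty$ and $0 < t < p/q$, a further BBM-type interpolation shows $[u_{s_n}]_{t,q}$ stays uniformly bounded (once $s_n$ is sufficiently close to $1$), which combined with the $L^p$ strong convergence forces $u \in W^{t,q}_0(\Omega)$ and the weak convergence $u_{s_n} \rightharpoonup u$ in $W^{t,q}_0$. The upgrade to strong convergence $[u_{s_n} - u]_{t,q} \to 0$ uses uniform convexity of the seminorm together with convergence of norms, exploiting that $[u_{s_n}]_{s_n,p}^p(1-s_n) \to K\|\nabla u\|_p^p$ by the matching of eigenvalues. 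The principal obstacle is precisely this identification: Theorem \ref{main.teo} produces convergence of \emph{optimal sets} (in fact only of their infimal values), not of individual eigenfunctions, so bridging the gap between the abstract minimax values and the concrete eigenfunction sequence — and then upgrading weak to strong convergence in all the fractional scales — is the delicate part, and is where the specific structure of the fractional $p$-Laplacian (monotonicity, Brezis–Lieb type identities for the Gagliardo seminorm) must be invoked beyond the abstract machinery.
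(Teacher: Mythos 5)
Your eigenvalue argument is essentially identical to the paper's: you build the same $F_n$, $F_0$, check (A1)--(A3) (with (A3) coming from Brezis--Bourgain--Mironescu), and apply Theorem~\ref{main.teo} together with Theorem~\ref{teo.gamma.2}; your explicit remark that $\Gamma$-convergence is preserved by the continuous increasing map $t \mapsto t^{1/p}$ is a point the paper leaves implicit but is the correct justification for passing from Theorem~\ref{gamma.BBM} to $\Gamma$-convergence of $F_n$.

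For the convergence of eigenfunctions, the paper does \emph{not} supply an argument at all --- it writes a single line, ``The convergence of the eigenfunctions follows as in \cite{BPS},'' deferring entirely to the original reference. Your proposal goes further by sketching a route (equicoercivity from Step~1, extraction of an $L^p$-convergent subsequence, the $\liminf$ bound, passage to the limit in the weak formulation, and an upgrade from weak to strong convergence in $W^{t,q}_0$). That route is plausible in outline, but as you yourself note, the key identifications are asserted rather than established: passing to the limit in the nonlinear, nonlocal weak formulation needs a genuine nonlocal-to-local compensated-compactness argument (as in \cite{BPS}), and showing that the limit $u$ is an eigenfunction \emph{at the specific variational level} $\lambda_{k,p}$ --- not merely at some eigenvalue $\le \lambda_{k,p}$ --- requires more than the Rayleigh-quotient inequality $K\|\nabla u\|_p^p \le K\lambda_{k,p}$. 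Likewise the strong convergence $[u_{s_n}-u]_{t,q} \to 0$ in all admissible scales $(t,q)$ is only gestured at via uniform convexity. None of this is a defect relative to the paper, since the paper proves none of it either; but if you wanted a self-contained proof, these are precisely the steps you would need to import from \cite{BPS} and make rigorous. As written, the proposal correctly reproduces the part of the argument the paper actually gives, and honestly flags the part both you and the paper leave to the external reference.
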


On the other hand, in \cite{LL}, the authors analyze, among other things, the limit of \eqref{eigen.sp} for $p\to\infty$. To be precise, the authors fix $\alpha\in (0,1)$ and consider the problem where $s = s_p=\alpha - \frac{N}{p}$ and go on studying the limit for $p\to\infty$. 

The result in \cite{LL} for this problem reads as follows.
\begin{teo}[\cite{LL}, Proposition 20]\label{teo.LL}
Let $\Omega\subset\R^N$ be an open and bounded Lipschitz set, let $\alpha\in (0,1)$ be fixed and let $s=s_p=\alpha - \frac{N}{p}$. Define
$$
\Lambda_{1,\infty}^\alpha := \inf_{\phi\in C^\infty_c(\Omega)} \frac{\left\|\frac{\phi(x)-\phi(y)}{|x-y|^\alpha}\right\|_{L^\infty(\R^N\times \R^N)}}{\|\phi\|_\infty}.
$$
Then
$$
\lim_{p\to\infty} (\lambda^{s_p}_{1,p})^\frac{1}{p} = \Lambda_{1,\infty}^\alpha.
$$
Moreover, $\Lambda_{1,\infty}^\alpha=R^{-\alpha}$ where $R$ is the inner radius of $\Omega$.
\end{teo}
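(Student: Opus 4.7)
The plan is to apply the abstract Theorem~\ref{main.teo} with $k=1$ to the functionals
$$
F_n(v):=(1-s_n)^{1/p_n}[v]_{s_n,p_n}\quad\text{if }v\in W^{s_n,p_n}_0(\Omega),\ \text{and }+\infty\text{ otherwise},
$$
where $p_n\to\infty$ and $s_n:=\alpha-N/p_n$, so that $p_0:=\infty$ and $s_0:=\alpha$. Conditions (A1) and (A2) are immediate. The key algebraic identity is $N+s_np_n=\alpha p_n$, which recasts the Gagliardo seminorm as
$$
[v]_{s_n,p_n}=\|g_v\|_{L^{p_n}(\R^N\times\R^N)},\qquad g_v(x,y):=\frac{v(x)-v(y)}{|x-y|^\alpha}.
$$
The natural candidate for the $\Gamma$-limit is $F_0\colon C_0(\Omega)\to[0,\infty]$ given by $F_0(v):=\|g_v\|_{L^\infty(\R^N\times\R^N)}$ when this quantity is finite and $+\infty$ otherwise.

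To verify (A3), note first that $(1-s_n)^{1/p_n}\to 1$ because $s_n\to\alpha<1$. For the $\liminf$ inequality in $C_0(\Omega)$, suppose $v_n\to v$ uniformly and $\liminf F_n(v_n)<\infty$. For any bounded measurable $E\subset\R^N\times\R^N$ at positive distance from the diagonal, H\"older's inequality gives
$$
|E|^{-1/q+1/p_n}\|g_{v_n}\|_{L^q(E)}\le\|g_{v_n}\|_{L^{p_n}(\R^N\times\R^N)}\qquad(q<p_n),
$$
and the uniform convergence $g_{v_n}\to g_v$ on $E$ together with letting first $n\to\infty$ and then $q\to\infty$ yields $\|g_v\|_{L^\infty(E)}\le\liminf_n F_n(v_n)$; a supremum over such $E$ (using that $g_v$ is continuous off the diagonal when $v$ is $\alpha$-H\"older) then gives $F_0(v)\le\liminf_n F_n(v_n)$. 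For the $\limsup$, the constant recovery sequence $y_n\equiv v$ suffices: if $F_0(v)<\infty$ then $v$ is $\alpha$-H\"older on $\R^N$ with support in $\overline{\Omega}$, so $g_v\in L^\infty$, and a routine tail estimate shows $g_v\in L^{p}(\R^N\times\R^N)$ for every $p>N/\alpha$; the classical $L^{p_n}\to L^\infty$ limit then gives $F_n(v)\to F_0(v)$.

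Applying Theorem~\ref{main.teo} and using that $\inf J_n^1=\inf_{\|v\|_{p_n}=1} F_n(v)=(1-s_n)^{1/p_n}(\lambda^{s_n}_{1,p_n})^{1/p_n}$ we obtain
$$
\lim_{n\to\infty}(1-s_n)^{1/p_n}\bigl(\lambda^{s_n}_{1,p_n}\bigr)^{1/p_n}=\inf_{G\in\Ks(\Omega)}J_0^1(G)=:L,
$$
and since the prefactor tends to $1$, $\lim_{p\to\infty}(\lambda^{s_p}_{1,p})^{1/p}=L$. To identify $L=R^{-\alpha}$, pick a maximal inscribed ball $B_R(x_0)\subset\Omega$ and take $w(x):=(R-|x-x_0|)_+^\alpha$; since $x\mapsto R-|x-x_0|$ is $1$-Lipschitz and $t\mapsto t^\alpha$ is $\alpha$-H\"older with constant $1$, we get $[w]_\alpha\le 1$ and $\|w\|_\infty=R^\alpha$, so $L\le R^{-\alpha}$, and a standard mollification-and-cut-off of $w$ produces smooth test functions showing $\Lambda^\alpha_{1,\infty}\le R^{-\alpha}$ as well. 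For the lower bound, any admissible $\phi$ with $\|\phi\|_\infty=1$ attains its maximum along some sequence $x_n\in\Omega$; taking $y_n\in\R^N\setminus\Omega$ closest to $x_n$ forces $|\phi(x_n)-\phi(y_n)|/|x_n-y_n|^\alpha\ge\phi(x_n)/R^\alpha$, hence $F_0(\phi)\ge R^{-\alpha}$, giving $L\ge\Lambda^\alpha_{1,\infty}\ge R^{-\alpha}$.

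The main obstacle is the verification of (A3): one must handle the unbounded integration domain $\R^N\times\R^N$ and the simultaneous limits in both $s$ and $p$, reconciling the $L^{p_n}$-norms (of a function with a diagonal singularity that is tame only because $v$ vanishes outside $\Omega$) with the essential supremum in the limit. Once this is in place, Theorem~\ref{main.teo} produces the convergence automatically, and the identification $L=R^{-\alpha}$ is a short geometric argument using a distance-function barrier and an interior peak.
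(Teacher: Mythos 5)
Your proposal is correct and follows the same basic route as the paper: you apply Theorem~\ref{main.teo} with $k=1$ to precisely the functionals $F_n$ in~\eqref{Fn}, with limit $F_0$ as in~\eqref{F0}, after establishing the $\Gamma$-convergence $F_n\stackrel{\Gamma}{\to}F_0$ in $C_0(\Omega)$ (which is the content of Lemma~\ref{lema.gamma}). The two proofs differ in the detail of the $\liminf$ inequality: the paper argues via Chebyshev's inequality applied to the superlevel sets $\{|D^\alpha v_n|>a\}$ and the monotone exhaustion $\bigcup_n\bigcap_{j\ge n}\{|D^\alpha v_j|>a\}$, whereas you use H\"older's inequality on bounded sets $E$ at positive distance from the diagonal, exploiting uniform convergence of $D^\alpha v_n$ on such $E$ and then letting $q\to\infty$ and taking a supremum over $E$. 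Both arguments are elementary and correct; yours has the mild virtue of avoiding measure-theoretic bookkeeping of superlevel sets, at the cost of an extra approximation parameter. The $\limsup$ direction is handled the same way in both (constant recovery sequence, tail estimate exploiting the $\alpha$-H\"older decay of $v$ near $\partial\Omega$ to get $D^\alpha v\in L^p$ for $p>N/\alpha$, then $\|\cdot\|_{p_n}\to\|\cdot\|_\infty$). The one place you go beyond the paper is the identification $\Lambda_{1,\infty}^\alpha=R^{-\alpha}$: the paper treats this as known from~\cite{LL} and does not reprove it, whereas your barrier $w(x)=(R-|x-x_0|)_+^\alpha$ for the upper bound (with the observation that $t\mapsto t^\alpha$ is $\alpha$-H\"older with constant $1$) and the distance-to-$\Omega^c$ comparison at a near-maximum point for the lower bound give a self-contained and correct proof of this step, which also conveniently shows that the min–max value $L$ over $\Ks(\Omega)$ and the infimum over $C^\infty_c(\Omega)$ both equal $R^{-\alpha}$, bypassing any explicit density argument. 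One small slip: you write ``$L\ge\Lambda^\alpha_{1,\infty}$''; a priori one only has $L\le\Lambda^\alpha_{1,\infty}$ since $C^\infty_c(\Omega)\subset C^\alpha_0(\Omega)$, but your two bounds $L\ge R^{-\alpha}$ and $\Lambda^\alpha_{1,\infty}\le R^{-\alpha}$ together with the trivial inequality close the loop, so the conclusion stands.
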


Moreover, in \cite{LL}, the authors analyze the behavior of higher eigenvalues, in the sense that they study the case of sign changing eigenfunctions for \eqref{eigen.sp}, but the authors did not get any result of the asymptotic behavior of the variational eigenvalues $\lambda^s_{k,p}$.

As a consequence of our general result, Theorem \ref{main.teo}, we can recover Theorem \ref{teo.BPS} and also Theorem \ref{teo.LL}. Moreover, in the case of Theorem \ref{teo.LL} we can also obtain the limit as $p\to\infty$ for the rest of the sequence of the variational eigenvalues.

\begin{proof}[Proof of Theorem \ref{teo.BPS} using Theorem \ref{main.teo}]

Let $\{s_n\}_{n\in\N}$ be a sequence of real numbers such that $0<s_n\uparrow 1$. Let us define $F_n\colon L^1(\Omega)\to [0,\infty]$ as
$$
F_n(v) = \begin{cases}
(1-s_n)^\frac{1}{p} [v]_{s_n, p} & \text{if } v\in W^{s_n, p}_0(\Omega)\\
\infty & \text{otherwise}.
\end{cases}
$$
We need to check that $\{F_n\}_{n\in\N}$ satisfies hypotheses (A1)--(A3). Observe that (A1) and (A2) are trivial, so we are left with (A3). To this end, we define $F_0\colon L^1(\Omega)\to [0,\infty]$ as
$$
F_0(v) = \begin{cases}
K^\frac{1}{p} \|\nabla v\|_{p} & \text{if } v\in W^{1, p}_0(\Omega)\\
\infty & \text{otherwise}.
\end{cases}
$$
Then Theorem \ref{gamma.BBM} gives us that $F_n\stackrel{\Gamma}{\to} F_0$ in $L^p(\Omega)$.

Now, just observe that Theorem \ref{main.teo} together with Theorem \ref{teo.gamma.2} directly imply that
$$
\lim_{n\to\infty} (\lambda^{s_n}_{k,p})^\frac{1}{p} = (K\lambda_{k,p})^\frac{1}{p}.
$$

The convergence of the eigenfunctions follows as in \cite{BPS}.
\end{proof}

We now turn our attention to the case $p\to\infty$. To this end, we consider a sequence $\{p_n\}_{n\in\N}$ such that $1<p_n\to\infty$, $\alpha\in (0,1)$, define $s_n = \alpha - \frac{N}{p_n}$ and define the functionals $F_n\colon L^1(\Omega)\to [0,\infty]$ as
\begin{equation}\label{Fn}
F_n(v) = \begin{cases}
(1-s_n)^\frac{1}{p_n} [v]_{s_n,p_n} & \text{if } v\in W^{s_n,p_n}_0(\Omega)\\
\infty & \text{otherwise}.
\end{cases}
\end{equation}
Next, for any $v\in C_0(\Omega)$ we denote
$$
D^\alpha v(x,y) = \frac{v(x)-v(y)}{|x-y|^\alpha},
$$
the H\"older quotient of order $\alpha$ and define $F_0\colon L^1(\Omega)\to [0,\infty]$ as
\begin{equation}\label{F0}
F_0(v) = \begin{cases}
\|D^\alpha v\|_{L^\infty(\R^N\times\R^N)} & \text{if } v\in C_0^\alpha(\Omega)\\
\infty & \text{otherwise.}
\end{cases}
\end{equation}
Let us first prove that $F_n\stackrel{\Gamma}{\to} F_0$.
\begin{lema}\label{lema.gamma}
With the above notations and hypotheses, we have that $F_n\stackrel{\Gamma}{\to} F_0$ in $C_0(\Omega)$.
\end{lema}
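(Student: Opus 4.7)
The plan is to use the identity $s_n p_n + N = \alpha p_n$ coming from $s_n = \alpha - N/p_n$ to rewrite both functionals in terms of the same H\"older quotient $D^\alpha v$. With this choice,
$$[v]_{s_n,p_n}^{p_n} = \iint_{\R^N\times\R^N} \frac{|v(x)-v(y)|^{p_n}}{|x-y|^{\alpha p_n}}\,dxdy = \|D^\alpha v\|_{L^{p_n}(\R^N\times\R^N)}^{p_n},$$
so that $F_n(v) = (1-s_n)^{1/p_n}\|D^\alpha v\|_{L^{p_n}(\R^N\times\R^N)}$. Since $s_n \to \alpha \in (0,1)$, the prefactor satisfies $(1-s_n)^{1/p_n}\to 1$, and the problem reduces to a $\Gamma$-style comparison between the $L^{p_n}$-norms and the $L^\infty$-norm of $D^\alpha v$.

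For the $\limsup$-inequality I would take the constant recovery sequence $v_n \equiv v$. The case $F_0(v) = +\infty$ is trivial, so assume $v\in C_0^\alpha(\Omega)$. I would first check that $D^\alpha v \in L^{p_0}(\R^N\times\R^N)$ for some finite $p_0 > N/\alpha$: on the region $\{|x-y|\le 1\}$ the integrand is bounded by $[v]_\alpha^{p_0}$ and has support contained in a bounded neighborhood of $\Omega\times \Omega$ (since $v$ vanishes outside $\Omega$), while on $\{|x-y|>1\}$ the estimate $|D^\alpha v|\le 2\|v\|_\infty|x-y|^{-\alpha}$ together with the integrability of $|x-y|^{-\alpha p_0}$ at infinity (valid when $\alpha p_0>N$) controls the tail. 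Once $D^\alpha v \in L^{p_0}\cap L^\infty$, the standard interpolation estimate $\|g\|_{p_n}\le \|g\|_\infty^{1-p_0/p_n}\|g\|_{p_0}^{p_0/p_n}$ yields $\limsup \|D^\alpha v\|_{p_n}\le \|D^\alpha v\|_\infty$, and combining with $(1-s_n)^{1/p_n}\to 1$ gives $\limsup F_n(v) \le F_0(v)$.

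For the $\liminf$-inequality, let $v_n\to v$ in $C_0(\Omega)$ and set $L:=\liminf F_n(v_n)$, which we may assume to be finite. Passing to a subsequence, $F_n(v_n)\le L+\ve$ for all $n$, so $\|D^\alpha v_n\|_{L^{p_n}(\R^N\times\R^N)}\le (L+\ve)/(1-s_n)^{1/p_n}$. Fix $q > N/\alpha$ and a bounded measurable set $A\subset\R^N\times\R^N$; for $n$ large $q\le p_n$, and H\"older's inequality on $A$ gives
$$\|D^\alpha v_n\|_{L^q(A)}\le |A|^{1/q-1/p_n}\|D^\alpha v_n\|_{L^{p_n}(\R^N\times\R^N)}.$$
Since $v_n\to v$ uniformly, $D^\alpha v_n \to D^\alpha v$ pointwise off the (null) diagonal, so Fatou's lemma gives $\|D^\alpha v\|_{L^q(A)}\le (L+\ve)|A|^{1/q}$. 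Sending $q\to \infty$ I obtain $\|D^\alpha v\|_{L^\infty(A)}\le L+\ve$, exhausting $\R^N\times\R^N$ by bounded $A$ gives $\|D^\alpha v\|_\infty \le L+\ve$, and $\ve\to 0$ closes the argument.

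The main technical delicacy is that all integrals live on the entire product space $\R^N\times\R^N$ rather than on a bounded domain, so both the $L^{p_0}$ integrability of $D^\alpha v$ needed for the $\limsup$ step and the truncation to a bounded $A$ before Fatou in the $\liminf$ step must carefully separate the behavior near the diagonal from the decay at infinity. The factor $(1-s_n)^{1/p_n}$ is harmless since it tends to $1$, but it must be tracked through each inequality.
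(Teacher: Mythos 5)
Your proof is correct in substance and follows the same overall skeleton as the paper's (constant recovery sequence plus an integrability claim for the $\limsup$, convergence of $L^{p_n}$-norms for the $\liminf$), but the two key inequalities are proved by genuinely different elementary tools, so a brief comparison is worthwhile.

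For the $\limsup$ inequality, the paper proves $D^\alpha v\in L^p$ for $\alpha p>N$ by splitting into $\Omega\times\Omega$, $\Omega\times\Omega^c$, $\Omega^c\times\Omega^c$, and on the mixed piece it uses the boundary decay $|v(x)|\le \|D^\alpha v\|_\infty d_\Omega(x)^\alpha$ together with the explicit computation of $\int_{\Omega^c}|x-y|^{-\alpha p}\,dy$. You instead split along $\{|x-y|\le 1\}$ and $\{|x-y|>1\}$ and use the cruder far-field bound $|D^\alpha v|\le 2\|v\|_\infty|x-y|^{-\alpha}$. Your split is simpler and avoids the distance-function estimate, but as written the tail step has a small gap: $\iint_{\{|x-y|>1\}}|x-y|^{-\alpha p_0}\,dx\,dy$ is \emph{infinite} on all of $\R^N\times\R^N$, because the $x$-integration is unbounded. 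The integral is controlled only because the integrand vanishes whenever \emph{both} $x,y\in\Omega^c$, so the relevant set is contained in $\left(\Omega\times\R^N\right)\cup\left(\R^N\times\Omega\right)$, where one variable is confined to the bounded set $\Omega$; you invoke this vanishing for the near-diagonal piece but should say it explicitly for the tail as well. Once that is said, the estimate and the subsequent interpolation $\|g\|_{p_n}\le\|g\|_\infty^{1-p_0/p_n}\|g\|_{p_0}^{p_0/p_n}$ close the $\limsup$ step; the paper instead quotes without proof the classical fact that $\|g\|_{p_n}\to\|g\|_\infty$ when $g\in L^{p_0}\cap L^\infty$, which is the same mathematics.

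For the $\liminf$ inequality, the paper argues via level sets: for $a<\|D^\alpha v\|_\infty$, pointwise convergence of $D^\alpha v_n$ gives $\liminf_n|\{|D^\alpha v_n|>a\}|>0$, Chebyshev yields $a\le\liminf_n\|D^\alpha v_n\|_{p_n}$, and then $a\uparrow\|D^\alpha v\|_\infty$. Your argument runs through $\|\cdot\|_{L^q(A)}\le|A|^{1/q-1/p_n}\|\cdot\|_{L^{p_n}}$ on bounded $A$, then Fatou to pass to the limit, then $q\to\infty$ and exhaustion of $\R^N\times\R^N$ by bounded sets. Both routes are valid; yours trades the set-theoretic bookkeeping of the paper (the nested sets $E_n$) for H\"older plus Fatou, which some readers may find cleaner, at the cost of having to introduce the auxiliary bounded set $A$ and pass to the limit in $q$. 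A cosmetic remark: the restriction $q>N/\alpha$ you impose is not actually needed at that point; any finite $q$ will do, since $p_n\to\infty$ guarantees $q\le p_n$ eventually.
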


\begin{proof}
First, observe that since $p_n\to\infty$, then $s_n\to\alpha$ and $(1-s_n)^\frac{1}{p_n}\to 1$.

Moreover, by the definition of $s_n$, it follows that
$$
[v]_{s_n,p_n} = \|D^\alpha v\|_{L^{p_n}(\R^N\times\R^N)},
$$
for any $v\in W^{s_n,p_n}_0(\Omega)$.

Let us first check the limsup inequality. So let $v\in C_0(\Omega)$ and we need to find $v_n\in C_0(\Omega)$ such that $v_n\to v$ uniformly and $F_0(v)\ge \limsup_{n\to\infty} F_n(v_n)$.

Without loss of generality, we may assume that $F_0(v)<\infty$, therefore $v\in C_0^\alpha(\Omega)$. Let us see that if $\alpha p_n>N$, then $v\in W^{s_n, p_n}_0(\Omega)$. 

{\em Claim:} if $\alpha p>N$, then $D^\alpha v\in L^p(\R^N\times\R^N)$. 

In fact,
\begin{align*}
\iint_{\R^N\times \R^N}|D^\alpha v|^p \, dxdy & = \left ( \iint_{\Omega\times\Omega}+2\iint_{\Omega\times\Omega^c}+\iint_{\Omega^c\times\Omega^c }\right ) |D^\alpha v|^p\, dxdy\\
&=I+II+III.
\end{align*}
We trivially have that $III=0$, Next, we observe that 
$$
I\leq \|D^\alpha v\|_\infty^p |\Omega|^2.
$$
It remains to get a bound for II. But,
$$
\iint_{\Omega\times\Omega^c}|D^\alpha v|^p\, dxdy=\iint_{\Omega\times\Omega^c} \frac{|v(x)|^p}{|x-y|^{\alpha p}}\, dxdy =\int_{\Omega}w(x) |v(x)|^p\,dx,
$$
where 
$$
w(x)=\int_{\Omega^c}\frac{1}{|x-y|^{\alpha p}}\,dy\le \int_{\{|z|>d_\Omega(x)\}} \frac{dz}{|z|^{\alpha p}} = \frac{C_N}{(\alpha p-N) d_\Omega(x)^{\alpha p-N}},
$$
with $d_\Omega(x)= \text{dist}(x,\Omega^c)$. Also, since $v\in C^\alpha_0(\Omega)$, if $y\in \partial\Omega$ is such that $d_\Omega(x)=|x-y|$,
$$
|v(x)| = |v(x)-v(y)|\le \|D^\alpha v\|_\infty |x-y|^\alpha = \|D^\alpha v\|_\infty d_\Omega(x)^\alpha.
$$
Then 
$$
\int_\Omega w(x) |v(x)|^p\,dx\le C \|D^\alpha v\|_\infty^p, 
$$
where $C$ depends on $N, \alpha, p$ and $\Omega$. Hence, we arrive at
$$
\|D^\alpha v\|_p\leq C \|D^\alpha v\|_\infty < \infty,
$$
and so the claim follows.

With the help of the claim, we may take $v_n=v$ for the limsup inequality, and since $\|D^\alpha v\|_{p_n}<\infty$ if $n\ge n_0$, it follows that
$$
\lim_{n\to\infty} [v]_{s_n,p_n} = \lim_{n\to\infty} \|D^\alpha v\|_{p_n} = \| D^\alpha v\|_\infty.
$$ 

It remains to check the liminf inequality. To this end, let $v\in C_0(\Omega)$ and $v_n\in C_0(\Omega)$ be such that $v_n\to v$ uniformly. We need to see that
$$
F_0(v)\le \liminf_{n\to\infty} F_n(v_n).
$$

Without loss of generality, we may assume that $\sup_{n\in\N} F_n(v_n) <\infty$. This implies, to begin with, that $v_n\in W^{s_n,p_n}_0(\Omega)$, and hence, $D^\alpha v_n\in L^{p_n}(\R^N\times\R^N)$.

Now, let $a<\|D^\alpha v\|_\infty$ and observe that, since $v_n\to v$ uniformly, it follows that $D^\alpha v_n\to D^\alpha v$ point-wise and then
$$
\{|D^\alpha v|>a\}\subset \bigcup_{n\in\N}\bigcap_{j\ge n} \{|D^\alpha v_j|>a\} = \bigcup_{n\in\N} E_n.
$$
Observe that $E_n\subset E_{n+1}$ and that $E_n\subset \{|D^\alpha v_n|>a\}$. Therefore,
$$
|\{|D^\alpha v|>a\}|\le \lim_{n\to\infty} |E_n| \le \liminf_{n\to\infty} |\{|D^\alpha v_n|>a\}|.
$$
So we conclude that
$$
1\le \liminf_{n\to\infty} |\{|D^\alpha v_n|>a\}|^\frac{1}{p_n}.
$$
Now we use Chebyshev's inequality to obtain
$$
a^{p_n} |\{|D^\alpha v_n|>a\}|\le \|D^\alpha v_n\|_{p_n}^{p_n},
$$
From where it follows that
$$
a\le \liminf_{n\to\infty}\|D^\alpha v_n\|_{p_n} = \liminf_{n\to\infty} [v_n]_{s_n, p_n}.
$$
We now may take $a\uparrow \|D^\alpha v\|_\infty$ and the proof is complete.
\end{proof}

Now we are is position to apply Theorem \ref{main.teo} to the case $p\to\infty$.
\begin{teo}\label{p.to.infty}
Under the notation of Section 3, we define
$$
\Lambda^\alpha_{k,\infty} := \inf_{G\subset \G^k_{\alpha,\infty}}\sup_{v\in G} \frac{\|D^\alpha v\|_\infty}{\|v\|_\infty}.
$$
Then, for every $k\in\N$ we have that 
$$
\lim_{p\to\infty} (\lambda^{s_p}_{k,p})^\frac{1}{p} = \Lambda^\alpha_{k,\infty}.
$$
\end{teo}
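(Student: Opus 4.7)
The plan is to apply the abstract Theorem \ref{main.teo} to the family $\{F_n\}_{n\in\N}$ defined in \eqref{Fn} and then translate the abstract conclusion into a statement about $p_n$-th roots of the eigenvalues.

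First, I would verify hypotheses (A1)--(A3). Convexity and $1$-homogeneity of $F_n$ are inherited directly from the Gagliardo seminorm $[\,\cdot\,]_{s_n,p_n}$, so (A1) is immediate. Hypothesis (A2) holds trivially with $\alpha=\beta=1$, since by construction $F_n(v)=(1-s_n)^{1/p_n}[v]_{s_n,p_n}$ on $W_0^{s_n,p_n}(\Omega)$ and $+\infty$ elsewhere. For (A3), note that the assumption $p_n\to\infty$ forces $p_0=\infty$ and $s_n=\alpha-N/p_n\to\alpha\in(0,1)$, so $s_0=\alpha$; the $\Gamma$-convergence of $F_n$ to $F_0$ in $C_0(\Omega)$ is precisely the content of Lemma \ref{lema.gamma}, already established above.

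Next, I would identify the abstract infima with the quantities appearing in the theorem. Since the admissible set $\G^k_{s_n,p_n}$ imposes $\|v\|_{p_n}=1$, and the map $t\mapsto t^{1/p_n}$ is increasing and commutes with the supremum over $v\in G$, one obtains
$$
\inf_{G\in\Ks(\Omega)}J_n^k(G)=\inf_{G\in\G^k_{s_n,p_n}}\sup_{v\in G}(1-s_n)^{1/p_n}[v]_{s_n,p_n}=(\lambda^{s_n}_{k,p_n})^{1/p_n}.
$$
Reading $\G^k_{\alpha,\infty}$ as the natural $p_0=\infty$ analogue (symmetric compact subsets of $C_0^\alpha(\Omega)$ with $\|v\|_\infty=1$ on $G$ and Krasnoselskii genus at least $k$), the $1$-homogeneity of both $F_0$ and $v\mapsto\|v\|_\infty$ then yields $\inf_{G\in\Ks(\Omega)}J_0^k(G)=\Lambda^\alpha_{k,\infty}$. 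Applying the conclusion of Theorem \ref{main.teo} gives the desired limit.

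There is no serious obstacle, since the genuine analytic content, namely both the $\limsup$ and $\liminf$ inequalities for the $\Gamma$-convergence in $C_0(\Omega)$, has been packaged in Lemma \ref{lema.gamma}. The only point deserving care is the identification of the limiting infimum with $\Lambda^\alpha_{k,\infty}$: this requires interpreting $\G^k_{\alpha,\infty}$ as the natural limiting admissible family and exploiting $1$-homogeneity to move freely between the normalization $\|v\|_\infty=1$ and the $\|v\|_\infty$-denominator in the definition of $\Lambda^\alpha_{k,\infty}$. With that identification the remainder of the argument is bookkeeping on top of Theorem \ref{main.teo} and Lemma \ref{lema.gamma}.
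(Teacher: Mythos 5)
Your proposal is correct and follows essentially the same route as the paper: verify (A1)--(A3) (with (A3) supplied by Lemma~\ref{lema.gamma}), then invoke Theorem~\ref{main.teo}. You usefully spell out the bookkeeping that the paper leaves implicit, namely that $\inf_G J_n^k(G) = (\lambda^{s_n}_{k,p_n})^{1/p_n}$ by monotonicity of $t\mapsto t^{1/p_n}$ and the normalization $\|v\|_{p_n}=1$, and that $\inf_G J_0^k(G) = \Lambda^\alpha_{k,\infty}$; the paper's additional citation of Theorem~\ref{teo.gamma.2} is not actually needed for the eigenvalue limit (only for convergence of quasi-minimizing sets), so omitting it is harmless.
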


\begin{remark}
Theorem \ref{p.to.infty} extends Theorem \ref{teo.LL} to the entire sequence of variational eigenvalues.
\end{remark}

\begin{proof}
Using Lemma \ref{lema.gamma}, it is now immediate to see that the sequence $\{F_n\}_{n\in\N}$ verifies the hypotheses (A1)--(A3) of Theorem \ref{main.teo}, and so the conclusions of Theorem \ref{main.teo} together with Theorem \ref{teo.gamma.2} imply the desired result.
\end{proof}

\subsection{Homogenization result for eigenvalues of fractional $p-$Laplace type operators}

Homogenization results for fractional operators is a subject where there are not many results in the literature. We may recall the results \cite{FB-R-S, Focardi, Focardi2,  Piatnitski-Zhizhina, Schwab, Schwab2}.

So here, given $0<\alpha\le \beta <\infty$, we consider the class of kernels 
$$
\A_{\alpha,\beta} := \{a\in L^\infty(\R^N\times\R^N)\colon a(x,y)=a(y,x) \text{ and } \alpha\le a(x,y)\le \beta \text{ a.e.}\}.
$$
Associated to each $a\in \A_{\alpha,\beta}$ we consider the operator
$$
\L_a u(x) = \L^{s,p}_a u(x) = \text{p.v.} \int_{\R^N} a(x,y)\frac{|u(x)-u(y)|^{p-2}(u(x)-u(y))}{|x-y|^{N+sp}}\, dy.
$$
This operator turns out to be the Fr\'echet derivative of the functional
$$
\Phi_a\colon W^{s,p}_0(\Omega)\to\R,\quad \Phi_a(u) := \iint_{\R^N\times\R^N} a(x,y)\frac{|u(x)-u(y)|^p}{|x-y|^{N+sp}}\, dxdy.
$$

Consider now a sequence of kernels $\{a_n\}_{n\in\N}\subset \A_{\alpha,\beta}$. Without loss of generality (passing to a subsequence), we may assume the existence of $a_0\in \A_{\alpha,\beta}$ such that $a_n\stackrel{*}{\rightharpoonup} a_0$ weakly* in $L^\infty$.

For $n\in \N_0$, we define $F_n\colon L^1(\Omega)\to [0,\infty]$ as
$$
F_n(v) := \begin{cases}
\left(\Phi_{a_n}(v)\right)^\frac{1}{p} & \text{if } v\in W^{s,p}_0(\Omega)\\
\infty & \text{otherwise}.
\end{cases}
$$

It is proved in \cite{Focardi} that $F_n\stackrel{\Gamma}{\to} F_0$ in $L^p(\Omega)$. However, we include a proof of this fact in order to keep the paper self contained. 

\begin{prop}
Let $F_n$ be the functionals defined above, then $F_n\stackrel{\Gamma}{\to} F_0$ in $L^p(\Omega)$.
\end{prop}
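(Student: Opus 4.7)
The plan is to verify each of the two inequalities defining $\Gamma$-convergence in $L^p(\Omega)$. For the limsup inequality I would take the constant recovery sequence $v_n\equiv v$: if $v\in W^{s,p}_0(\Omega)$, the function $|v(x)-v(y)|^p/|x-y|^{N+sp}$ is integrable on $\R^N\times\R^N$ by definition of the Gagliardo seminorm, so testing the weak-$*$ convergence $a_n\stackrel{*}{\rightharpoonup}a_0$ against this $L^1$ function gives $\Phi_{a_n}(v)\to\Phi_{a_0}(v)$, hence $F_n(v)\to F_0(v)$. The case $v\notin W^{s,p}_0(\Omega)$ is trivial since both $F_n(v)$ and $F_0(v)$ equal $+\infty$.

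For the liminf inequality, I would start from a sequence $v_n\to v$ in $L^p(\Omega)$ with $\liminf F_n(v_n)<\infty$; the lower bound $a_n\ge\alpha$ gives $\sup_n[v_n]_{s,p}<\infty$, so along a subsequence we obtain $v_n\rightharpoonup v$ weakly in $W^{s,p}_0(\Omega)$ (in particular $v\in W^{s,p}_0(\Omega)$) and, refining again, $v_n\to v$ a.e.\ on $\R^N$. Setting $f_u(x,y):=(u(x)-u(y))/|x-y|^{(N+sp)/p}$, this yields $f_n\to f_v$ a.e.\ on $\R^N\times\R^N$ while $\|f_n\|_{L^p(\R^N\times\R^N)}=[v_n]_{s,p}$ stays bounded.

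The main obstacle is that the weak-$*$ convergence of $a_n$ in $L^\infty$ does not combine with the weak $L^p$ convergence of $f_n$ to pass to the limit in $\iint a_n|f_n|^p$: the natural subgradient-type inequality $|f_n|^p\ge|f_v|^p+p|f_v|^{p-2}f_v(f_n-f_v)$ stalls at a genuine weak--weak product $\iint(a_n|f_v|^{p-2}f_v)(f_n-f_v)$, which need not vanish. My way around this is to invoke the Brezis--Lieb lemma, which under the above a.e.\ convergence and $L^p$-boundedness of $f_n$ delivers the strong statement
$$\iint_{\R^N\times\R^N}\bigl||f_n|^p-|f_n-f_v|^p-|f_v|^p\bigr|\,dxdy\longrightarrow 0.$$
Since $0\le a_n\le\beta$, this upgrades to $\iint a_n|f_n|^p=\iint a_n|f_n-f_v|^p+\iint a_n|f_v|^p+o(1)$. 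The first summand on the right is nonnegative, while the second converges to $\iint a_0|f_v|^p=F_0(v)^p$ by the weak-$*$ convergence of $a_n$ tested against $|f_v|^p\in L^1$. Taking $\liminf$ and $p$-th roots gives $F_0(v)\le\liminf F_n(v_n)$ and concludes the argument.
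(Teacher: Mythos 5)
Your proof is correct, and the liminf half uses a genuinely different mechanism than the paper's. Both proofs start identically: extract a subsequence with $v_n\rightharpoonup v$ weakly in $W^{s,p}_0(\Omega)$, and both must overcome the fact that $a_n\stackrel{*}{\rightharpoonup}a_0$ in $L^\infty$ cannot be paired directly with the merely weakly convergent Gagliardo quotients. The paper's resolution is a truncation argument: on the set $Q_{R,\delta}=(B_R\times B_R)\setminus\{|x-y|\le\delta\}$ the singular kernel $|x-y|^{-(N+sp)}$ is bounded, so the strong $L^p(\R^N)$ convergence $v_n\to v$ upgrades the difference quotient to \emph{strong} $L^1(Q_{R,\delta})$ convergence, after which pairing with $a_n$ is a weak$^*$--strong duality, and one drops the nonnegative complement in the $\liminf$ and finally sends $R\to\infty$, $\delta\to 0$. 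Your resolution instead applies the Brezis--Lieb lemma to the functions $f_n=f_{v_n}$ on the whole of $\R^N\times\R^N$: the a.e.\ convergence $f_n\to f_v$ plus the $L^p$ bound produce the $L^1$-splitting $|f_n|^p=|f_n-f_v|^p+|f_v|^p+o(1)$, which survives multiplication by the uniformly bounded $a_n$, and then you only ever test $a_n$ against the fixed function $|f_v|^p\in L^1$ and discard the nonnegative remainder $\iint a_n|f_n-f_v|^p$. Your route avoids the double limiting process in $(R,\delta)$ and is arguably shorter, at the price of invoking Brezis--Lieb (in its $L^1$ form, which is what you need so that multiplication by $a_n\le\beta$ preserves the decomposition); the paper's route is more elementary and self-contained, trading the lemma for a monotone-convergence finish. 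The limsup halves coincide: both take the constant recovery sequence and pair $a_n\stackrel{*}{\rightharpoonup}a_0$ against the fixed $L^1$ integrand.
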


\begin{proof}
Let us begin with the liminf inequality. Take $v\in L^p(\Omega)$ and $\{v_n\}_{n\in\N}\subset L^p(\Omega)$ be such that $v_n\to v$ in $L^p(\Omega)$ and 
$$
F_0(v)\le \liminf_{n\to\infty} F_n(v_n).
$$
We may assume that $\sup_{n\in\N} F_n(v_n)<\infty$ and so $v_n\in W^{s,p}_0(\Omega)$ for every $n\in\N$. Moreover, this assumption also implies that $\{v_n\}_{n\in\N}$ is bounded in $W^{s,p}_0(\Omega)$.

Passing to a subsequence, if necessary, we can assume that $v_n\rightharpoonup v$ weakly in $W^{s,p}_0(\Omega)$.

Consider now $0<\delta<R<\infty$ and define the sets
$$
Q_{R,\delta} = (B_R(0)\times B_R(0))\setminus \{(x,y)\in \R^N\times\R^N\colon |x-y|\le \delta\}.
$$
Since $|x-y|^{-(N+sp)}$ is bounded in $Q_{R,\delta}$ and $|v_n|^p\to |v|^p$ strongly in $L^1(\R^N)$, it follows that
$$
\frac{|v_n(x)-v_n(y)|^p}{|x-y|^{N+sp}}\to \frac{|v(x)-v(y)|^p}{|x-y|^{N+sp}} \quad \text{strongly in } L^1(Q_{R,\delta}).
$$
Therefore,
\begin{align*}
\liminf_{n\to\infty} F_n(v_n)^p &= \liminf_{n\to\infty}\iint_{\R^N\times\R^N} a_n(x,y) \frac{|v_n(x)-v_n(y)|^p}{|x-y|^{N+sp}}\, dxdy\\
&\ge \lim_{n\to\infty}\iint_{Q_{R,\delta}} a_n(x,y) \frac{|v_n(x)-v_n(y)|^p}{|x-y|^{N+sp}}\, dxdy\\
&=\iint_{Q_{R,\delta}} a_0(x,y) \frac{|v(x)-v(y)|^p}{|x-y|^{N+sp}}\, dxdy.
\end{align*}
Now, taking the limit as $R\to\infty$ and $\delta\to 0$ we obtain the liminf inequality.

Next we turn our attention to the limsup inequality. So let $v\in L^p(\Omega)$ and we must find a recovery sequence $\{v_n\}_{n\in\N}\subset L^p(\Omega)$ such that $v_n\to v$ in $L^p(\Omega)$ and $F_0(v)\ge \limsup_{n\to\infty} F_n(v_n)$. Since we can assume that $F_0(v)<\infty$, it follows that $v\in W^{s,p}_0(\Omega)$ and this implies that 
$$
\frac{|v(x)-v(y)|^p}{|x-y|^{N+sp}}\in L^1(\R^N\times\R^N).
$$

Hence, we may take $v_n=v$ and, since $a_n\stackrel{*}{\rightharpoonup} a_0$ weakly* in $L^\infty(\R^N\times\R^N)$ we get
\begin{align*}
\lim_{n\to\infty}F_n(v)^p &= \lim_{n\to\infty}\iint_{\R^N\times\R^N} a_n(x,y) \frac{|v(x)-v(y)|^p}{|x-y|^{N+sp}}\, dxdy\\
&=\iint_{\R^N\times\R^N} a_0(x,y) \frac{|v(x)-v(y)|^p}{|x-y|^{N+sp}}\, dxdy\\
&= F_0(v)^p.
\end{align*}
This completes the proof.
\end{proof}

Therefore, as an immediate consequence of Theorem \ref{main.teo}, we get
\begin{teo}
Under the notations of this subsection, if we denote
$$
\lambda_k^n = \sup_{G\in \G^k_{s,p}} \inf_{v\in G} \frac{F_n(v)}{\|v\|_p},
$$
for $n\ge 0$, then
$$
\lim_{n\to\infty}\lambda_k^n = \lambda_k^0,\quad \text{for all }k\in\N.
$$
\end{teo}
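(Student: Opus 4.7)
The plan is to recognize this theorem as a direct corollary of Theorem \ref{main.teo}: once we verify that the sequence $\{F_n\}_{n\in\N}$ defined in this subsection satisfies hypotheses (A1)--(A3), the conclusion will follow from Theorem \ref{main.teo} combined with Theorem \ref{teo.gamma.2}. Here the parameters $s_n=s$ and $p_n=p$ are constant in $n$, so the target values are $s_0 = s$ and $p_0 = p$, and the ambient space for $\Gamma$-convergence is $L^p(\Omega)$.

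First I would check (A1). For each $n$, the functional
$$
F_n(v) = \left(\iint_{\R^N\times\R^N} a_n(x,y)\frac{|v(x)-v(y)|^p}{|x-y|^{N+sp}}\,dxdy\right)^{1/p}
$$
is a weighted $L^p$-seminorm of the linear expression $v(x)-v(y)$ with measure $a_n(x,y)|x-y|^{-(N+sp)}\,dxdy$, so it is automatically convex and $1$-homogeneous. Next, (A2) is immediate from the pointwise bounds $\alpha\le a_n(x,y)\le\beta$ defining $\A_{\alpha,\beta}$: they yield
$$
\alpha^{1/p}[v]_{s,p}\le F_n(v)\le \beta^{1/p}[v]_{s,p},
$$
so after absorbing the constant factor $(1-s)^{1/p}$ into the bounds one obtains the form required in (A2). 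Hypothesis (A3) is precisely the content of the preceding proposition, which establishes $F_n\stackrel{\Gamma}{\to}F_0$ in $L^p(\Omega)$; the trivial convergences $s_n\to s$ and $p_n\to p$ complete (A3).

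With (A1)--(A3) verified, Theorem \ref{main.teo} yields equicoercivity of the family $\{J_n^k\}_{n\in\N}$, the $\liminf$ inequality, and the convergence of infima
$$
\lim_{n\to\infty}\inf_{G\in\Ks(\Omega)} J_n^k(G) = \inf_{G\in\Ks(\Omega)} J_0^k(G).
$$
Since the constraint $\|v\|_p=1$ is built into the definition of $\G^k_{s,p}$, these infima are exactly the eigenvalues $\lambda_k^n$ as stated, so the desired convergence $\lambda_k^n\to\lambda_k^0$ follows; a final application of Theorem \ref{teo.gamma.2} to a sequence of quasi-minimizing $G_n\in\G^k_{s,p}$ identifies accumulation points as minimizers for the limit problem if one additionally wants convergence of the optimal sets. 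The only real work in this theorem was done in the preceding proposition (the $\Gamma$-convergence of the energies under weak-$\ast$ convergence of the kernels); the abstract machinery then does the rest, so there is no further obstacle to overcome.
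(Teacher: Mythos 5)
Your proposal is correct and takes essentially the same approach as the paper: the paper's own proof is the one-line observation that (A1)--(A3) are trivially verified, after which Theorems \ref{main.teo} and \ref{teo.gamma.2} give the conclusion. You simply spell out the verification of (A1)--(A3) that the paper leaves implicit (convexity and $1$-homogeneity of the weighted Gagliardo seminorm, the two-sided bounds coming from $\alpha\le a_n\le\beta$, and the $\Gamma$-convergence from the preceding proposition with constant $s_n=s$, $p_n=p$).
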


\begin{remark}
Observe that each $\lambda_k^n$ is an eigenvalue for the problem
$$
\begin{cases}
\L_{a_n} u = \lambda_k^n |u|^{p-2}u & \text{in }\Omega\\
u=0 & \text{in }\Omega^c
\end{cases}
$$
for $n\ge 0$.
\end{remark}

\begin{proof}
The proof is immediate from Theorems \ref{main.teo} and \ref{teo.gamma.2}, since hypotheses (A1)--(A3) are trivially verified.
\end{proof}

\begin{remark}
Up to our knowledge, this eigenvalue problem has not been considered previously in the literature.
\end{remark}

\section*{Acknowledgements}

Supported by Universidad de Buenos Aires under grant UBACYT Prog. 2018 20020170100445BA,
by ANPCyT under grants PICT 2016-1022 and PICT 2017-0704, by Universidad Nacional de San Luis under grants PROIPRO 03-2418 and PROICO 03-1916 and by CONICET under grant PIP 11220150100032CO. 

J. Fern\'andez Bonder and Anal\'ia Silva are members of CONICET.

\bibliographystyle{amsplain}
\bibliography{biblio}

\end{document}